\newcommand{\R}{{\mathbb R}}
\newcommand{\Z}{{\mathbb Z}}
\newcommand{\N}{{\mathbb N}}
\newcommand{\C}{{\mathbb C}}
\newcommand{\Sp}{{\mathbb S}}
\newcommand{\be}{\begin{eqnarray}}
\newcommand{\ben}{\begin{eqnarray*}}
\newcommand{\en}{\end{eqnarray}}
\newcommand{\enn}{\end{eqnarray*}}
\newcommand{\ba}{\backslash}
\newcommand{\pa}{\partial}
\newcommand{\ov}{\overline}
\newcommand{\I}{{\rm Im}}
\newcommand{\Rt}{{\rm Re}}
\newcommand{\la}{\lambda}
\newcommand{\wid}{\widetilde}
\newcommand{\se}{\setminus}
\newtheorem{remark}[theorem]{Remark}
\begin{document}
\renewcommand{\theequation}{\arabic{section}.\arabic{equation}}
\title{\bf Uniqueness in inverse scattering problems with phaseless far-field data at a fixed frequency
}
\author{Xiaoxu Xu\thanks{Academy of Mathematics and Systems Science, Chinese Academy of Sciences,
Beijing 100190, China and School of Mathematical Sciences, University of Chinese
Academy of Sciences, Beijing 100049, China ({\tt xuxiaoxu14@mails.ucas.ac.cn})}
\and Bo Zhang\thanks{NCMIS, LSEC and Academy of Mathematics and Systems Science, Chinese Academy of
Sciences, Beijing, 100190, China and School of Mathematical Sciences, University of Chinese
Academy of Sciences, Beijing 100049, China ({\tt b.zhang@amt.ac.cn})}
\and Haiwen Zhang\thanks{Academy of Mathematics and Systems Science, Chinese Academy of Sciences,
Beijing 100190, China ({\tt zhanghaiwen@amss.ac.cn})}}
\date{}

\maketitle

\begin{abstract}
This paper is concerned with uniqueness in inverse acoustic scattering with phaseless far-field data
at a fixed frequency. The main difficulty of this problem is the so-called translation invariance
property of the modulus of the far-field pattern generated by one plane wave as the incident field.
Based on our previous work (J. Comput. Phys. 345 (2017), 58-73), the translation invariance property
of the phaseless far-field pattern can be broken by using infinitely many sets of superpositions of
two plane waves as the incident fields at a fixed frequency.
In this paper, we prove that the obstacle and the index of refraction of an inhomogeneous
medium can be uniquely determined by the phaseless far-field patterns generated by infinitely many sets of
superpositions of two plane waves with different directions at a fixed frequency under the condition
that the obstacle is a priori known to be a sound-soft or non-absorbing impedance obstacle and the
index of refraction $n$ of the inhomogeneous medium is real-valued and satisfies that either $n-1\ge c_1$
or $n-1\le-c_1$ in the support of $n-1$ for some positive constant $c_1$.
To the best of our knowledge, this is the first uniqueness result in inverse scattering with phaseless
far-field data. Our proofs are based essentially on the limit of the normalized eigenvalues of the far-field
operators which is also established in this paper by using a factorization of the far-field operators.
\end{abstract}

\begin{keywords}
Uniqueness, inverse scattering, phaseless far-field pattern, Dirichlet boundary conditions,
impedance boundary conditions, inhomogeneous medium
\end{keywords}

\begin{AMS}
78A46, 35P25
\end{AMS}

\pagestyle{myheadings}
\thispagestyle{plain}
\markboth{X. Xu, B. Zhang, and H. Zhang}{Uniqueness in inverse scattering with phaseless far-field data}

\section{Introduction}\label{sec1}

Inverse scattering by bounded obstacles and inhomogeneous media has wide applications in many areas
such as radar and sonar detection, geophysical prospection, medical imaging and nondestructive testing
(see, e.g. \cite{CK}). The present paper is concerned with inverse scattering with phaseless far-field
data associated with incident plane waves.

To provide a precise description of the problem, assume that the obstacle $D$ is an open and bounded
domain in $\R^3$ with $C^2$-boundary $\pa D$ such that the exterior $\R^3\se\ov{D}$ of $\ov{D}$ is connected.
Consider the time-harmonic ($e^{-i\omega t}$ time dependence) plane wave
\ben
u^i=u^i(x,d):=\exp(ikd\cdot x)
\enn
which is incident on the obstacle $D$ from the unbounded part $\R^3\ba\ov{D}$,
where $d\in\Sp^2$ is the incident direction with $\Sp^2$ denoting the unit sphere in $\R^3$, $k=\omega/c>0$
is the wave number, $\omega$ and $c$ are the wave frequency and speed in the homogeneous medium in
$\R^3\ba\ov{D}$. Denote by $u^s$ the scattered field. Then the total field $u:=u^i+u^s$ outside
an impenetrable obstacle $D$ satisfies the exterior boundary value problem:
\begin{subequations}
\be\label{he}
\Delta u+k^2 u&=&0\quad{\rm in}\;\;\R^3\se\ov{D},\\ \label{bc}
{\mathscr B}u&=&0\quad{\rm on}\;\;\pa D,\\ \label{rc}
\lim\limits_{r\rightarrow\infty}r\left(\frac{\pa u^s}{\pa r}-iku^s\right)&=&0,\quad r=|x|.
\en
\end{subequations}
Here, (\ref{he}) is the well-known Helmholtz equation, (\ref{rc}) is the Sommerfeld
radiation condition which ensures the uniqueness of the scattered field $u^s$.
The boundary condition ${\mathscr B}$ in (\ref{bc}) depends on the physical property of the obstacle $D$:
\ben
\begin{cases}
{\mathscr B}u=u & \text{for a sound-soft obstacle},\\
{\mathscr B}u={\pa u}/{\pa\nu}+\eta u & \text{for an impedance obstacle},
\end{cases}
\enn
where $\nu$ is the unit outward normal to the boundary $\pa D$ and $\eta$ is
the impedance function satisfying that $\I[\eta(x)]\geq0$ for all $x\in\pa D$.
In this paper, we assume that $\eta\in C(\pa D)$, that is, $\eta$ is continuous on $\pa D$.
If $\I[\eta(x)]>0$ for all $x\in\pa D$, we say the impedance boundary condition is
\emph{absorbing}; if $\I[\eta(x)]=0$ for all $x\in\pa D$, we say the impedance boundary
condition is \emph{non-absorbing}. When $\eta=0$, the impedance boundary condition becomes
the Neumann boundary condition (sound-hard obstacles), so the Neumann boundary condition is non-absorbing.

The problem of scattering of a plane wave by an inhomogeneous medium is modelled
by the medium scattering problem:
\begin{subequations}
\be\label{he-n}
\Delta u+k^2n(x)u&=&0\quad\mbox{in}\;\;\R^3,\\ \label{rc-n}
\lim\limits_{r\rightarrow\infty}r\left(\frac{\pa u^s}{\pa r}-iku^s\right)&=&0,\quad r=|x|,
\en
\end{subequations}
where $u=u^i+u^s$ is the total field, $u^s$ is the scattered field, and $n$ in the reduced wave
equation (\ref{he-n}) is the refractive index characterizing the inhomogeneous medium.
In this paper, we assume that $m:=n-1$ has compact support $\ov{D}$ and $n\in L^\infty(D)$ with
$\Rt[n(x)]>0$ and $\I[n(x)]\geq0$ for all $x\in D$. If $\I[n(x)]>0$ for all $x\in D$,
then the medium is called \emph{absorbing}; if $\I[n(x)]=0$ for all $x\in D$ (so $n$ is real-valued),
then the medium is called \emph{non-absorbing}.

The existence of a unique (variational) solution to the problems (\ref{he})-(\ref{rc}) and
(\ref{he-n})-(\ref{rc-n}) has been proved in \cite{CK,KG,Kirsch} (see Theorem 3.11 in \cite{CK}
and Theorem 1.1 in \cite{KG} for the exterior Dirichlet problem, Theorem 2.2 in \cite{KG}
for the exterior impedance problem and Theorem 6.9 in \cite{Kirsch} for the medium scattering
problem (\ref{he-n})-(\ref{rc-n})). In particular, it is well-known that the scattered field $u^s$
has the asymptotic behavior:
\ben
u^s(x,d)=\frac{e^{ik|x|}}{|x|}\left\{u^\infty(\hat{x},d)+\left(\frac{1}{|x|}\right)\right\},
\quad|x|\rightarrow\infty
\enn
uniformly for all observation directions $\hat{x}=x/|x|\in\Sp^2$, where $u^\infty(\hat{x},d)$ is the
far field pattern of $u^s$ which is an analytic function of $\hat{x}\in\Sp^2$ for each $d\in\Sp^2$
and of $d\in\Sp^2$ for each $\hat{x}\in\Sp^2$ (see, e.g., \cite{CK}).

The inverse scattering problem is to determine the shape and location of the obstacle $D$ and its
physical property or the index of refraction $n$ of the medium from the near-field (the scattered
field $u^s$ or the total field $u$) or the far field pattern $u^\infty$.
In many practical applications, the phase of the near-field or the far-field pattern can not be measured
accurately compared with its modulus or intensity. Thus, it is often desirable to recover the obstacle
and the medium from the modulus or intensity of the near-field or the far-field pattern (or the phaseless
near-field data or the phaseless far-field data).
The inverse scattering problem with phaseless near-field data ($|u^s|$ or $|u|$ on a measurement surface
enclosing the obstacle or the medium) or the phaseless far-field pattern $|u^\infty|$ is called
the \emph{inverse scattering problem with phaseless data},
while the inverse scattering problem with the near field data (the scattered field $u^s$ or the total field
$u$ on a measurement surface enclosing the obstacle or the medium) or the far-field pattern $u^\infty$ is
called the \emph{inverse scattering problem with full data}.

Over the past three decades, inverse scattering problems with full data have been extensively studied
mathematically and numerically (see, e.g., the monographs \cite{CK,KG} and the references quoted there).
The inverse scattering problem with phaseless near-field data is also called the (near-field)
\emph{phase retrieval problem} in optics and other physical and engineering sciences and has also been
widely studied numerically over the past decades (see, e.g. \cite{Candes13,Candes15}).
For examples, many reconstruction algorithms have been developed to recover the obstacle or the refractive
index of the medium from the phaseless near-field data (or the phaseless total near-field data),
corresponding to plane wave incidence or point source incidence
(see, e.g. \cite{TGRS03,CH16,CH17,Li09,MDS92,MD93,Pan11,MOTL97} and the references quoted there).
However, few results are available for the mathematical study (such as uniqueness and stability) of the
near-field phase retrieval problem.
This is mainly because Rellich's lemma (see \cite[Lemma 2.12]{CK}), which not only ensures uniqueness for
scattering solutions and also establishes the one-to-one correspondence between the scattered fields and
their far-field patterns and plays an essential role for the mathematical study of the inverse scattering
problems with full data, dose not work any more for inverse scattering with phaseless data.
This fact makes the phaseless inverse scattering problems much more difficult to study mathematically.
Recently, a uniqueness result has been established in \cite{K14} for
recovering a nonnegative, smooth, compactly supported, real-valued potential from the phaseless near-field data
corresponding to all incident point sources placed on a spherical surface for an interval of frequencies.
This uniqueness result was extended in \cite{K17} to the case of recovering the smooth wave speed in the
three-dimensional Helmholtz equation. Reconstruction procedures were introduced in \cite{KR16,N15,N16}
for the inverse medium scattering problems with phaseless near-field data.
Recently in \cite{MH17}, the stability analysis was established for a linearized near-field phase retrieval
problem for weakly scattering objects known as the contrast transfer function model in X-ray phase contrast
imaging.

In contrast to the case with phaseless near-field data, inverse scattering with phaseless far-field data
is more challenging due to the \emph{translation invariance property} of the phaseless far-field pattern
which was proved in \cite{KR} for sound-soft obstacles and in \cite{LS04} for sound-hard and impedance
obstacles (see also \cite{ZZ01}). For the shifted obstacle $D_\ell:=\{x+\ell:x\in D\}$ or the refractive
index of the shifted inhomogeneous medium, $n_\ell(x):=n(x-\ell)$ with $\ell\in\R^3$, the scattered
field $u_\ell^s$ corresponding to the incident plane wave $u^i(x,d)=e^{ikx\cdot d}$ is given as
\ben
u_\ell^s(x,d)=e^{ik\ell\cdot d}u^s(x-\ell,d),\quad x\in\R^3\setminus\ov{D_\ell},\;d\in\Sp^2
\enn
in terms of the scattered field $u^s$ corresponding to $D$ or $n$. The corresponding far-field pattern is
\ben
u_\ell^\infty(\hat{x},d)=e^{ik\ell\cdot(d-\hat{x})}u^\infty(\hat{x},d),\quad\hat{x},d\in\Sp^2,\;\ell\in\R^3,
\enn
where $u_\ell^\infty$ is the far-field pattern for the obstacle $D_\ell$ or the refractive index $n_\ell$.
Thus we have the translation invariance property
\be\label{TIP}
|u_\ell^\infty(\hat{x},d)|=|u^\infty(\hat{x},d)|,\quad\hat{x},d\in\Sp^2,\;\ell\in\R^3.
\en
This means that it is impossible to reconstruct the location of the obstacle $D$ or the inhomogeneous
medium from the phaseless far-field pattern with one plane wave as the incident field.
However, several reconstruction algorithms have been developed to reconstruct the shape of the obstacle
from the phaseless far-field data with one plane wave as the incident field
(see \cite{I2007,IK2010,IK2011,KR,LiLiu15,S16}).
For plane wave incidence no uniqueness results are available for the general inverse obstacle scattering
problems with phaseless far-field data. By assuming a priori the obstacle to be a sound-soft ball centered
at the origin, uniqueness was established in determining the radius of the ball from a single phaseless
far-field datum in \cite{LZ10}.

We remark that a continuation algorithm was proposed in \cite{BaoLiLv2012} to reconstruct the shape
of a perfectly reflecting grating profile from the phaseless near-field data associated with incident
plane waves and a recursive linearization algorithm in frequencies was introduced in \cite{BaoZhang16}
to recover the shape of multi-scale sound-soft large rough surfaces from phaseless measurements of
the scattered field generated by tapered waves with multiple frequencies.
Note that the phaseless near-field data is also invariant under translations in the non-periodic
direction of the periodic grating profile in the former case and in the vertical direction of the
unbounded rough surface in the later case.

As discussed above, for plane wave incidence it is the translation invariance property (\ref{TIP})
which makes it impossible to recover the location of the scattering obstacle from phaseless far-field data.
Recently in \cite{ZZ01}, it was proved that the translation invariance property of the phaseless far-field
pattern can be broken if superpositions of two plane waves rather than one plane wave are used as the
incident fields with an interval of frequencies. A recursive Newton-type iteration algorithm in frequencies
was also developed in \cite{ZZ01} to recover both the location and the shape of the obstacle simultaneously
from multi-frequency phaseless far-field data. This approach was further extended to inverse scattering by
locally rough surfaces with phaseless far-field data in \cite{ZZ02}.
On the other hand, by means of the results in \cite{ZZ01} it is also easy to see that the translation
invariance property of the phaseless far-field pattern can be broken by using infinitely many sets of
superpositions of two plane waves with different directions as the incident fields at a fixed frequency.
Based on this, we have recently developed a fast imaging algorithm in \cite{ZZ03} to recover scattering
obstacles by phaseless (or intensity-only) far-field data at a fixed frequency.

Motivated by \cite{ZZ01,ZZ02,ZZ03}, we prove in this paper that the obstacle $D$ and the refractive index
$n$ can be uniquely determined by the phaseless far-field patterns generated by infinitely many sets of
superpositions of two plane waves with different directions at a fixed frequency under the condition
that $D$ is a priori known to be a sound-soft or a non-absorbing impedance obstacle and $n$ is a priori
assumed to be real-valued with the condition that either $n-1\ge c_1$ or $n-1\le-c_1$ for some constant
$c_1>0$. As far as we know, this is the first uniqueness result in inverse scattering
with phaseless far-field data. Our proofs are based essentially on the limit of the normalized eigenvalues
of the far-field operators (Theorem \ref{eigentend}) which is established in this paper by using
a factorization of the far-field operators. It should be pointed out that the limit of the normalized
eigenvalues of the far-field operators is established only for sound-soft or non-absorbing impedance
obstacles and for non-absorbing inhomogeneous media. For the absorbing cases, however, the limit of the
normalized eigenvalues of the far-field operators is not known yet.

This paper is organized as follows. The main results are presented in Section \ref{main-results}.
Spectral properties of the far-field operators are established in Section \ref{spectrum}.
Section \ref{sec4} is devoted to the proof of the main results.
Conclusions are given in Section \ref{conclusion}.

\section{The main results}\label{main-results}

In this section, let the wave number $k$ be arbitrarily fixed. Following \cite{ZZ01,ZZ03},
we make use of the following superposition of two plane waves as the incident field:
\ben
u^i=u^i(x;d_1,d_2)=u^i(x,d_1)+u^i(x,d_2)=e^{ikx\cdot d_1}+e^{ikx\cdot d_2},
\enn
where $d_1,d_2\in\Sp^2$ are the incident directions with $d_1\not=d_2$.
Then the scattered field $u^s$ has the asymptotic behavior
\ben
u^s(x;d_1,d_2)=\frac{e^{ik|x|}}{|x|}\left\{u^\infty(\hat{x};d_1,d_2)
+O\left(\frac{1}{|x|}\right)\right\},\;\;|x|\to\infty,
\enn
uniformly for all observation directions $\hat{x}\in\Sp^2$. From the linear superposition principle
it follows that \[u^s(x;d_1,d_2)=u^s(x,d_1)+u^s(x,d_2)\] and
\be\label{lsp}
u^\infty(\hat{x};d_1,d_2)=u^\infty(\hat{x},d_1)+u^\infty(\hat{x},d_2),
\en
where $u^s(x,d_j)$ and $u^\infty(\hat{x},d_j)$ are the scattered field and its far-field pattern
corresponding to the incident plane wave $u^i(x,d_j)$, $j=1,2.$

Denote by $u_j^s$ and $u_j^\infty$ the scattered field and its far-field pattern, respectively,
associated with the obstacle $D_j$ (or the inhomogeneous medium with the refractive index $n_j$) and
corresponding to the incident field $u^i$, $j=1,2$.
Then we have the following main results on uniqueness in inverse scattering with phaseless far-field data.

\begin{theorem}\label{main}
$(i)$ Assume that $D_1$ and $D_2$ are two sound-soft obstacles. If the corresponding far-field patterns
coincide, that is,
\be\label{m=}
|u_1^\infty(\hat{x};d_1,d_2)|=|u_2^\infty(\hat{x};d_1,d_2)|,\quad\forall\hat{x},d_1,d_2\in\Sp^2,
\en
then $D_1=D_2$.

$(ii)$ Assume that $D_1$ and $D_2$ are two non-absorbing impedance obstacles (i.e., $\I(\eta_j)=0$,
$j=1,2$) with the impedance coefficient $\eta_j\in C(\pa D_j)$, $j=1,2$.
If the corresponding far-field patterns satisfy $(\ref{m=})$, then $D_1=D_2$ and $\eta_1=\eta_2$.

$(iii)$ Assume that $n_1,n_2\in L^\infty(\R^3)$ are the refractive indices of two non-absorbing inhomogeneous
media (i.e., $\I(n_j)=0$, $j=1,2$) with $n_j-1$ supported in $\ov{D_j}$, $j=1,2$. Assume further that there is
a constant $c_1>0$ such that either $n_j-1\ge c_1$ in $D_j$ ($j=1,2$) or $n_j-1\le-c_1$ in $D_j$ ($j=1,2$).
If the corresponding far field patterns satisfy $(\ref{m=})$, then $n_1=n_2$.
\end{theorem}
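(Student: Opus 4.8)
The plan is to turn the phaseless data into essentially full far-field data and then to invoke the classical uniqueness theorems with full data; the one genuinely new ingredient is the eigenvalue limit of Theorem~\ref{eigentend}. The scheme is the same in cases (i)--(iii), so I describe it uniformly, writing $F_j$ for the far-field operator associated with $D_j$ (resp.\ $n_j$).

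First I would extract phase information up to a pointwise rotation/reflection. Taking $d_1=d_2$ in $(\ref{m=})$ gives $|u_1^\infty(\hat x,d)|=|u_2^\infty(\hat x,d)|$ for all $\hat x,d\in\Sp^2$, and expanding $|u_j^\infty(\hat x,d_1)+u_j^\infty(\hat x,d_2)|^2$ in $(\ref{m=})$ with the help of $(\ref{lsp})$ and then subtracting yields
\[
\Rt\big[u_1^\infty(\hat x,d_1)\overline{u_1^\infty(\hat x,d_2)}\big]=\Rt\big[u_2^\infty(\hat x,d_1)\overline{u_2^\infty(\hat x,d_2)}\big],\qquad\hat x,d_1,d_2\in\Sp^2 .
\]
Since the imaginary part of $u_j^\infty(\hat x,d_1)\overline{u_j^\infty(\hat x,d_2)}$ is antisymmetric in $(d_1,d_2)$, these two relations are equivalent to $|(F_1g)(\hat x)|=|(F_2g)(\hat x)|$ for every real-valued $g\in L^2(\Sp^2)$ and every $\hat x$. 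For fixed $\hat x$ this means that the real-linear maps $g\mapsto(F_jg)(\hat x)\in\C\cong\R^2$ have the same Gram structure, hence differ by an element of $O(2)$, so there is a unimodular $\kappa(\hat x)$ with either $u_2^\infty(\hat x,\cdot)=\kappa(\hat x)\,u_1^\infty(\hat x,\cdot)$ or $u_2^\infty(\hat x,\cdot)=\kappa(\hat x)\,\overline{u_1^\infty(\hat x,\cdot)}$. Using continuity of $u_j^\infty$ and connectedness of $\Sp^2$ I would show that one alternative holds for all $\hat x$, and using the reciprocity relation $u_j^\infty(\hat x,d)=u_j^\infty(-d,-\hat x)$ together with the analyticity of the far-field pattern (so that $\kappa(\hat x)=\kappa(-d)$ on the dense set where $u_1^\infty(\hat x,d)\ne0$) that $\kappa$ reduces to a constant $\kappa_0$, $|\kappa_0|=1$. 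Hence $u_2^\infty=\kappa_0u_1^\infty$ or $u_2^\infty=\kappa_0\overline{u_1^\infty}$ on $\Sp^2\times\Sp^2$.

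Next I would fix the phase and discard the conjugate alternative using Theorem~\ref{eigentend}. If $u_2^\infty=\kappa_0u_1^\infty$, then $F_2=\kappa_0F_1$, so $F_1$ and $F_2$ have the same eigenfunctions and the eigenvalues of $F_2$ are those of $F_1$ scaled by $\kappa_0$; since by Theorem~\ref{eigentend} the normalized eigenvalues of $F_1$ and $F_2$ converge to one and the same nonzero (and, in fact, real) constant $\lambda_\infty$ --- the same because $D_1,D_2$ are of the same admissible type, resp.\ $n_1-1$ and $n_2-1$ have the same sign --- it follows that $\kappa_0\lambda_\infty=\lambda_\infty$, hence $\kappa_0=1$. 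If $u_2^\infty=\kappa_0\overline{u_1^\infty}$, then $\overline g$ is an eigenfunction of $F_1$ with eigenvalue $\overline\mu/\overline{\kappa_0}$ exactly when $g$ is an eigenfunction of $F_2$ with eigenvalue $\mu$, so the two spectra are related by $\mathrm{spec}(F_2)=\kappa_0\,\overline{\mathrm{spec}(F_1)}$; matching eigenvalues by modulus and applying Theorem~\ref{eigentend} to both operators gives $\kappa_0\overline{\lambda_\infty}=\lambda_\infty$, and since $\lambda_\infty$ is real this forces $\kappa_0=1$ and $u_2^\infty=\overline{u_1^\infty}$. But $D_2$ (resp.\ $n_2$) is a nontrivial non-absorbing scatterer, so $u_2^\infty(\cdot,d)\not\equiv0$ for each $d$ (otherwise Rellich's lemma forces $u_2^s(\cdot,d)\equiv0$ and contradicts the boundary condition, resp.\ $n_2\ne1$), whence the optical theorem (cf.\ Section~\ref{spectrum}) gives $\I\,u_2^\infty(d,d)>0$; yet $u_2^\infty=\overline{u_1^\infty}$ would give $\I\,u_2^\infty(d,d)=-\I\,u_1^\infty(d,d)<0$, a contradiction. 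So the conjugate alternative is impossible and $u_1^\infty\equiv u_2^\infty$ on $\Sp^2\times\Sp^2$.

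With $u_1^\infty=u_2^\infty$, the assertions follow from the classical uniqueness theorems for inverse scattering with full far-field data at a fixed frequency: a sound-soft obstacle is determined by its far-field pattern (Schiffer's theorem), a non-absorbing impedance obstacle together with its coefficient $\eta$ is determined, and a real-valued refractive index is determined (see, e.g., \cite{CK,KG} and the references therein). The hard part will be the middle stage --- breaking the modulus/phase ambiguity, especially excluding the complex-conjugate alternative and pinning down the global unimodular factor, which is exactly what the limit of the normalized eigenvalues of the far-field operators (Theorem~\ref{eigentend}) is designed to provide --- together with the bookkeeping needed in the first stage to upgrade the pointwise $O(2)$-ambiguity to one global alternative with a constant $\kappa_0$ (handling the exceptional directions $\hat x$ at which $u_1^\infty(\hat x,\cdot)$ might happen to be a constant phase times a real-valued function).
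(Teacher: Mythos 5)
Your proposal follows essentially the same route as the paper's proof: reduce the phaseless identity to the cross-term relation, derive the dichotomy $u_2^\infty=\kappa u_1^\infty$ or $u_2^\infty=\kappa\,\ov{u_1^\infty}$ with unimodular $\kappa$, use reciprocity and analyticity to make $\kappa$ constant, invoke Theorem \ref{eigentend} to force $\kappa=1$ and to exclude the conjugate branch, and finish with the classical full-data uniqueness theorems. The only differences are cosmetic: the paper obtains the local dichotomy from the cosine identity for the phase functions $\theta_j$ on small open sets where $r\neq 0$ (rather than your $O(2)$/Gram argument, whose exceptional-direction bookkeeping the paper sidesteps by analytic continuation), and it rules out $u_2^\infty=\ov{u_1^\infty}$ by noting that all non-zero eigenvalues must lie on the circle $(\ref{circle})$ in the upper half-plane (rather than via the optical theorem, which encodes the same normality identity $(\ref{adjoint})$).
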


We need much less phaseless far-field data to determine sound-soft or non-absorbing impedance obstacles, as
seen from the following theorem.

\begin{theorem}\label{main2}
$(i)$ Assume that $D_1$ and $D_2$ are two sound-soft obstacles. If the corresponding far-field patterns
satisfy that
\be\label{r=}
|u_1^\infty(\hat{x},d)|=|u_2^\infty(\hat{x},d)|,\quad\forall\hat{x},d\in\Sp^2
\en
and
\be\label{m=2}
|u_1^\infty(\hat{x};d,d_0)|=|u_2^\infty(\hat{x};d,d_0)|,\quad\forall \hat{x},d\in\Sp^2
\en
for an arbitrarily fixed $d_0\in\Sp^2$, then $D_1=D_2$.

$(ii)$ Assume that $D_1$ and $D_2$ are two non-absorbing impedance obstacles (i.e., $\I(\eta_j)=0$,
$j=1,2$) with the impedance coefficient $\eta_j\in C(\pa D_j)$, $j=1,2$.
If the corresponding far-field patterns satisfy $(\ref{r=})$ and $(\ref{m=2})$, then $D_1=D_2$ and
$\eta_1=\eta_2$.
\end{theorem}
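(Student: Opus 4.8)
The plan is to show that the phaseless data $(\ref{r=})$ and $(\ref{m=2})$ determine the far-field patterns of $D_1$ and $D_2$ up to exactly one of two ambiguities---a global unimodular phase, or a global complex conjugation---then to reduce the first possibility to Theorem \ref{main} and to rule out the second by means of the spectral result Theorem \ref{eigentend}. Parts $(i)$ and $(ii)$ will be handled by one and the same argument, the distinction appearing only at the very end in which part of Theorem \ref{main} is invoked.

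First I would fix $\hat x\in\Sp^2$, square $(\ref{m=2})$, and use the superposition identity $(\ref{lsp})$ together with $(\ref{r=})$ applied at $d$ and at $d_0$: the modulus-squared terms cancel, leaving
\[
\Rt\!\left[u_1^\infty(\hat x,d)\,\overline{u_1^\infty(\hat x,d_0)}\right]=\Rt\!\left[u_2^\infty(\hat x,d)\,\overline{u_2^\infty(\hat x,d_0)}\right],\qquad \hat x,d\in\Sp^2.
\]
On the dense open set where $u_j^\infty(\hat x,d_0)\neq0$ (note $u_j^\infty(\cdot,d_0)$ is analytic and not identically zero), put $c_j(\hat x,d):=u_j^\infty(\hat x,d)\,\overline{u_j^\infty(\hat x,d_0)}\,|u_j^\infty(\hat x,d_0)|^{-1}$; then $|c_1|=|c_2|$ by $(\ref{r=})$ and $\Rt c_1=\Rt c_2$ by the displayed identity, so pointwise $c_1\in\{c_2,\overline{c_2}\}$. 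Using the joint real-analyticity of $u_1^\infty,u_2^\infty$ in $(\hat x,d)$---the two alternatives hold on relatively open sets away from the analytic sets $\{u_j^\infty(\cdot,d_0)=0\}$ and $\{\I c_2=0\}$, and analytic continuation in both variables excludes a mixed situation---I would upgrade this to a global dichotomy: either \emph{Case A}, $u_1^\infty(\hat x,d)=e^{i\theta(\hat x)}u_2^\infty(\hat x,d)$ for all $\hat x,d\in\Sp^2$; or \emph{Case B}, $u_1^\infty(\hat x,d)=e^{i\psi(\hat x)}\,\overline{u_2^\infty(\hat x,d)}$ for all $\hat x,d\in\Sp^2$, with $\theta,\psi$ real-valued.

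In Case A the phase factor $e^{i\theta(\hat x)}$ cancels in every product $u_1^\infty(\hat x,d_1)\overline{u_1^\infty(\hat x,d_2)}$, so by $(\ref{lsp})$ and $(\ref{r=})$ one gets $|u_1^\infty(\hat x;d_1,d_2)|=|u_2^\infty(\hat x;d_1,d_2)|$ for all $\hat x,d_1,d_2\in\Sp^2$, i.e.\ $(\ref{m=})$ holds; Theorem \ref{main}$(i)$ then gives $D_1=D_2$, and Theorem \ref{main}$(ii)$ gives $D_1=D_2$ and $\eta_1=\eta_2$. In Case B I would first use the reciprocity relation $u_j^\infty(\hat x,d)=u_j^\infty(-d,-\hat x)$ (see \cite{CK}): inserting $u_1^\infty(\hat x,d)=u_1^\infty(-d,-\hat x)$ and reciprocity for $u_2^\infty$ gives $e^{i\psi(\hat x)}=e^{i\psi(-d)}$ wherever $u_2^\infty(\hat x,d)\neq0$, and since $u_2^\infty(\hat x,\cdot)\not\equiv0$ for every $\hat x$ (otherwise, by reciprocity and Rellich's lemma, $u_2^s(\cdot,-\hat x)\equiv0$, impossible for a scatterer), $\psi\equiv\psi_0$ is constant and $u_1^\infty(\hat x,d)=e^{i\psi_0}\,\overline{u_2^\infty(\hat x,d)}$. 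Then the far-field operators $F_j$ ($j=1,2$), $(F_jg)(\hat x)=\int_{\Sp^2}u_j^\infty(\hat x,d)g(d)\,ds(d)$, satisfy $F_1=e^{i\psi_0}\overline{F_2}$, where $\overline{F_2}g:=\overline{F_2\bar g}$ has kernel $\overline{u_2^\infty(\hat x,d)}$; since $F_2$ is normal (conservation of energy, $D_2$ being non-absorbing) with eigenvalues $\{\mu_n\}$, the operator $\overline{F_2}$ has eigenvalues $\{\overline{\mu_n}\}$, so $F_1$ has eigenvalues $\{e^{i\psi_0}\overline{\mu_n}\}$. By Theorem \ref{eigentend} the normalized eigenvalues of $F_1$ and of $F_2$ converge, as $n\to\infty$, to one and the same real, nonzero number $z_0$ (depending only on the boundary-condition type, which is common to $D_1$ and $D_2$); hence $z_0=e^{i\psi_0}\overline{z_0}$, and since $z_0$ is real and nonzero this forces $e^{i\psi_0}=1$, i.e.\ $u_1^\infty(\hat x,d)=\overline{u_2^\infty(\hat x,d)}$. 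But then $\I u_1^\infty(\hat x,\hat x)=-\I u_2^\infty(\hat x,\hat x)$, which contradicts the optical theorem $\I u_j^\infty(\hat x,\hat x)=c_k\int_{\Sp^2}|u_j^\infty(\theta,\hat x)|^2\,ds(\theta)$ ($c_k>0$), whose right-hand side is strictly positive (the obstacle scatters the plane wave $u^i(\cdot,\hat x)$) and, by $(\ref{r=})$, the same for $j=1$ and $j=2$. So Case B cannot occur, and the proof is complete.

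The hard part will be the exclusion of Case B. The conjugation ambiguity $u_1^\infty\leftrightarrow\overline{u_2^\infty}$ is consistent with reciprocity and even with the full unitarity relation for the scattering operator when these are used in isolation; breaking it genuinely requires the asymptotic location of the far-field eigenvalues provided by Theorem \ref{eigentend}, which is precisely where the hypothesis that the obstacles are sound-soft or non-absorbing impedance enters. A secondary, purely technical, difficulty is the analytic-continuation argument in the first step needed to promote the pointwise alternative $c_1\in\{c_2,\overline{c_2}\}$ to the global dichotomy, including the handling of the zero sets of $u_j^\infty(\cdot,d_0)$.
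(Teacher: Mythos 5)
Your proposal is correct and follows the same overall strategy as the paper: the paper proves Theorem \ref{main2} by rerunning the proof of Theorem \ref{main} with $d_2$ frozen at $d_0$, arriving at exactly your dichotomy (a global unimodular factor versus a global conjugation), using reciprocity to make the factor constant and Theorem \ref{eigentend} to pin it down. You deviate in two localized but worthwhile ways. First, in Case A you observe that the relation $u_1^\infty(\hat x,d)=e^{i\theta(\hat x)}u_2^\infty(\hat x,d)$ makes every interference term $u^\infty(\hat x,d_1)\overline{u^\infty(\hat x,d_2)}$ coincide, so the reduced data already imply the full hypothesis (\ref{m=}) and Theorem \ref{main} can be cited as a black box; the paper instead repeats the reciprocity-plus-eigenvalue argument. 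Second, in Case B, after forcing $e^{i\psi_0}=1$ you conclude via the optical theorem $\I u_j^\infty(\hat x,\hat x)=\frac{k}{4\pi}\|u_j^\infty(\cdot,\hat x)\|^2>0$, whereas the paper notes that $\{\lambda_n\}$ and $\{\overline{\lambda_n}\}$ cannot both lie on the circle (\ref{circle}) in the upper half-plane; both endgames rest on the same identity (\ref{adjoint}) from Lemma \ref{normal}, so they are essentially equivalent, though your version is arguably the more elementary statement of the same obstruction. One caveat: the promotion of the pointwise alternative $c_1\in\{c_2,\overline{c_2}\}$ to a dichotomy that is global (in particular, not mixed as $\hat x$ varies) is asserted rather than proved in your sketch, but the paper's own treatment of the corresponding step (passing from the cosine identity to (\ref{case1}) or (\ref{case2}) on $U_1\times U_2$) is at the same level of detail, so this is not a gap relative to the paper.
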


\begin{remark}\label{re1} {\rm
(i) Theorem \ref{main2} remains true for the two-dimensional case, and the proof is the same.

(ii) Theorem \ref{main} (ii) also holds in two dimensions if the assumption $n_1,n_2\in L^\infty(\R^3)$
is replaced by the condition that $n_1,n_2$ are piecewise in $W^{1,p}(D)$ with $p>2$. In this case,
the proof is similar except that we need Bukhgeim's result in \cite{B} (see also the theorem in
Section 4.1 in \cite{Blasten}) instead of \cite[Theorem 6.26]{Kirsch} in the proof.
}
\end{remark}

\section{Spectrum of the far-field operators}\label{spectrum}

Our proofs of the main Theorems \ref{main} and \ref{main2} depend essentially on the spectral properties
of the far-field operator $F:L^2(\Sp^2)\rightarrow L^2(\Sp^2)$ defined by
\be\label{F}
(Fg)(\hat{x}):=\int_{\Sp^2}u^\infty(\hat{x},d)g(d)ds(d),\quad \hat{x}\in\Sp^2.
\en
Since the kernel $u^\infty(\hat{x},d)$ is analytic in $\hat{x}\in\Sp^2$ and in $d\in\Sp^2$, respectively,
the far-field operator $F$ is compact and hence has at most a countable number of eigenvalues with the only
possible accumulation point being $0$. Further, the multiplicity of each eigenvalue is finite since the
dimension of the null space of $\lambda I-F$ for each non-zero eigenvalue $\lambda$ is finite.
We need more spectral properties of the far-field operators.

\subsection{A countably infinite number of eigenvalues of far-field operators}

In this subsection, we will show that $F$ has a countably infinite number of non-zero eigenvalues
for the cases considered in this paper.
To this end, we introduce the adjoint operator $F^*:L^2(\Sp^2)\rightarrow L^2(\Sp^2)$ defined by
\[
(Fg,h)=(g,F^*h),\quad\forall g,h\in L^2({\mathbb S}^2),
\]
where $(\cdot,\cdot)$ is the inner product of $L^2({\mathbb S}^2)$. The following lemma shows that
$F$ is normal for the cases considered in this paper, which was proved in \cite{KG} for a sound-soft or
sound-hard obstacle (see \cite[Theorem 1.8]{KG}) and an impedance obstacle with real-valued impedance
coefficient $\eta$ (see \cite[Theorem 2.5 (e)]{KG}) and in \cite{Kirsch} for an inhomogeneous medium
with real-valued index of refraction $n\in L^\infty(\R^3)$ satisfying that $n-1$ has a compact support
(see \cite[Theorem 6.16]{Kirsch}).

\begin{lemma}\label{normal}
For a sound-soft obstacle or an impedance obstacle with real-valued impedance function
$\eta\in L^\infty(\pa D)$ and for an inhomogeneous medium with real-valued index of refraction
$n\in L^\infty(\R^3)$ satisfying that $n-1$ has a compact support, we have
\be\label{adjoint}
F-F^*-\frac{ik}{2\pi}F^*F=0,
\en
so $F$ is normal.
\end{lemma}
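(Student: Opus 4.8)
The plan is to establish the operator identity $F - F^* - \frac{ik}{2\pi}F^*F = 0$ directly from the reciprocity relation $u^\infty(\hat x, d) = u^\infty(-d, -\hat x)$ together with the so-called general scattering relation (the mixed reciprocity / energy identity) connecting $u^\infty(\hat x, d) - \overline{u^\infty(-\hat x, d)}$ to a bilinear expression in the far-field pattern over $\Sp^2$. First I would recall the well-known identity (see \cite[Theorem 3.23]{CK}), valid for sound-soft obstacles, impedance obstacles with real-valued $\eta$, and non-absorbing media, which reads
\be\label{genrel}
u^\infty(\hat x, d) - \overline{u^\infty(-\hat x, -d)} = \frac{ik}{2\pi}\int_{\Sp^2} u^\infty(\hat x, \hat y)\,\overline{u^\infty(-d,\hat y)}\,ds(\hat y).
\en
The appearance of the non-absorption hypothesis is exactly at this point: the derivation of \eqref{genrel} uses Green's theorem on the exterior domain, and the boundary contribution $\int_{\pa D}(\ldots)$ vanishes precisely because $\I[\eta] = 0$ (respectively the volume term $k^2\int_D \I[n]|u_1\overline{u_2}|\,dx$ vanishes because $\I[n]=0$ in the medium case). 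I would simply cite the references \cite{KG,Kirsch} indicated in the statement rather than reprove \eqref{genrel}.

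Next I would translate \eqref{genrel} into the operator language. Using reciprocity $u^\infty(\hat x, d) = u^\infty(-d,-\hat x)$ one checks that the adjoint $F^*$ has kernel $\overline{u^\infty(d,\hat x)} = \overline{u^\infty(-\hat x, -d)}$, so that for $g, h \in L^2(\Sp^2)$,
\be\label{Fstar}
(F^* h)(\hat x) = \int_{\Sp^2}\overline{u^\infty(-\hat x,-d)}\,h(d)\,ds(d).
\en
Then $(F - F^*)g$ has kernel $u^\infty(\hat x, d) - \overline{u^\infty(-\hat x,-d)}$, which by \eqref{genrel} equals $\frac{ik}{2\pi}\int_{\Sp^2} u^\infty(\hat x,\hat y)\overline{u^\infty(-d,\hat y)}\,ds(\hat y)$. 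The remaining step is to recognize this kernel as the kernel of $\frac{ik}{2\pi}F^*F$: one computes $(F^* F g)(\hat x) = \int_{\Sp^2}\overline{u^\infty(-\hat x,-d)}\,(Fg)(d)\,ds(d) = \int_{\Sp^2}\int_{\Sp^2}\overline{u^\infty(-\hat x,-d)}\,u^\infty(d,\hat y)\,g(\hat y)\,ds(\hat y)\,ds(d)$, and after applying reciprocity $\overline{u^\infty(-\hat x,-d)} = \overline{u^\infty(d,\hat x)}$ inside — wait, more carefully $u^\infty(d,\hat y) = u^\infty(-\hat y,-d)$ and $\overline{u^\infty(-\hat x,-d)} = \overline{u^\infty(d,\hat x)}$ — a change of the order of integration and a relabelling $d \leftrightarrow \hat y$ shows the inner kernel matches $\frac{ik}{2\pi}\int_{\Sp^2}u^\infty(\hat x,\hat y)\overline{u^\infty(-d,\hat y)}\,ds(\hat y)$, as required. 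Equating kernels of the compact integral operators on both sides of \eqref{adjoint} (which is legitimate since the kernels are continuous, indeed analytic) completes the identity.

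Finally, normality of $F$ follows formally from \eqref{adjoint}: rewriting it as $F - F^* = \frac{ik}{2\pi}F^*F$ and taking adjoints gives $F - F^* = \frac{ik}{2\pi}F^*F = \frac{-ik}{2\pi}\,\overline{(\,\cdot\,)}$... more precisely, from $F = F^* + \frac{ik}{2\pi}F^*F$ we get $FF^* = F^*F^* + \frac{ik}{2\pi}F^*FF^*$ and $F^*F = F^*F^* + \frac{ik}{2\pi}F^*F^*F$, and subtracting, $FF^* - F^*F = \frac{ik}{2\pi}F^*(FF^* - F^*F)$; iterating, or equivalently noting $I - \frac{ik}{2\pi}F^*$ is invertible at a fixed $k$ because $F^*$ is compact (so $\frac{ik}{2\pi}$ times it has no eigenvalue $1$ unless... one checks this is automatic from the structure, or argues via $F^*$ being a scattering operator whose nonzero spectrum lies on a circle), forces $FF^* - F^*F = 0$. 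I expect the main obstacle to be the bookkeeping in matching the kernel of $F^*F$ with the right-hand side of \eqref{genrel} — getting all the sign reflections $\hat x \mapsto -\hat x$, $d\mapsto -d$ and the complex conjugations to line up correctly — together with justifying the final cancellation step cleanly; the cleanest route for the latter is to invoke that $\frac{ik}{2\pi}F$ is known to be a unitary-minus-identity type operator (equivalently $S = I + \frac{ik}{2\pi}F$ is unitary), from which $F$ normal is immediate, but since \eqref{adjoint} already encodes that, I would present the short algebraic argument above.
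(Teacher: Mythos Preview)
The paper does not actually prove Lemma~\ref{normal}; it simply cites \cite[Theorem~1.8 and Theorem~2.5(e)]{KG} and \cite[Theorem~6.16]{Kirsch}. Your proposal is a reconstruction of the standard argument found in those references: derive the kernel identity from the general scattering relation together with reciprocity, and then extract normality. That outline is correct and is precisely what the cited proofs do, so in that sense you are aligned with the paper (by way of its sources).

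Two small comments. First, the bookkeeping you flag is indeed the only delicate point, and your computation of the kernel of $F^*F$ drifts a bit; the cleanest way is to write $(F^*Fg)(\hat x)=\int_{\Sp^2}\overline{u^\infty(\hat y,\hat x)}\,(Fg)(\hat y)\,ds(\hat y)$ directly and compare with the right-hand side of your \eqref{genrel} after one application of reciprocity and the substitution $\hat y\mapsto-\hat y$. Second, the step from \eqref{adjoint} to normality is not pure algebra: \eqref{adjoint} says $\mathcal S^*\mathcal S=I$ for $\mathcal S=I+\frac{ik}{2\pi}F$, but one still needs $\mathcal S\mathcal S^*=I$. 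This follows because $F$ is compact, so $\mathcal S$ is Fredholm of index zero; injectivity (from $\mathcal S^*\mathcal S=I$) then forces $\mathcal S$ to be invertible with $\mathcal S^{-1}=\mathcal S^*$, whence $\mathcal S$ is unitary and $F$ is normal. You allude to this with the remark that $I-\frac{ik}{2\pi}F^*$ is invertible by compactness, but the argument as you wrote it (subtracting the two products and iterating) does not close without this Fredholm step.
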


\begin{remark}\label{re-circle} {\rm
Define the \emph{scattering operator} $\mathcal{S}:L^2({\mathbb S}^2)\rightarrow L^2({\mathbb S}^2)$ by
\[\mathcal{S}:=I+\frac{ik}{2\pi}F.\]
Then
\[\mathcal{S}^*\mathcal{S}=(I-\frac{ik}{2\pi}F^*)(I+\frac{ik}{2\pi}F)=I.\]
Hence $\mathcal{S}$ is \emph{unitary}. From the fact that the eigenvalues of a unitary operator lie on the
unit circle in the complex plane, we know that every non-zero eigenvalue of $F$ lies on the circle
\be\label{circle}
\left\{z\in{\mathbb C}:\left|z-\frac{2\pi i}{k}\right|=\frac{2\pi}{k}\right\}
\en
in the upper-half complex plane. We later show (Theorem \ref{eigentend}) that the eigenvalues tend to
zero from the left half of this circle (or the normalized non-zero eigenvalues tend to $-1$) for both
sound-soft obstacles and real-valued indices of refraction $n$ with $n-1\le-c_1<0$ in $D$
and from the right half of this circle (or the normalized non-zero eigenvalues tend to $1$)
for impedance obstacles with real-valued impedance coefficients $\eta$ and real-valued indices of
refraction $n$ with $n-1\ge c_1>0$ in $D$.
Our proof is based essentially on a factorization of the far-field operator in which the middle
operator can be decomposed into the sum of a coercive operator and a compact part.
}
\end{remark}

To obtain a factorization of the far-field operator we introduce the following boundary integral operators
$S,K,K':H^{-1/2}(\pa D)\to H^{1/2}(\pa D)$ and $T:H^{1/2}(\pa D)\to H^{-1/2}(\pa D)$ defined by
\ben
(S\varphi)(x)&:=&\int_{\pa D}\Phi_k(x,y)\varphi(y)ds(y),\quad x\in\pa D,\\
(K\varphi)(x)&:=&\int_{\pa D}\frac{\pa\Phi_k(x,y)}{\pa\nu(y)}\varphi(y)ds(y),\quad x\in\pa D,\\
(K'\varphi)(x)&:=&\frac{\pa}{\pa\nu}\int_{\pa D}\Phi_k(x,y)\varphi(y)ds(y),\quad x\in\pa D,\\
(T\psi)(x)&:=&\frac{\pa}{\pa\nu}\int_{\pa D}\frac{\pa\Phi_k(x,y)}{\pa\nu(y)}\psi(y)ds(y),\quad x\in\pa D
\enn
and the volume integral operator $S_m:L^2(D)\to L^2(D)$ defined by
\ben
(S_m\phi)(x):=\frac{\phi(x)}{m(x)}-k^2\int_D\Phi_k(x,y)\phi(y)dy,\quad x\in D,
\enn
where $m(x)=n(x)-1\not=0$ for $x\in D$, $\nu(y)$ denotes the exterior unit normal vector at $y\in\pa D$
and $\Phi_k$ stands for the fundamental solution of the Helmholtz equation in three dimensions given by
\ben
\Phi_k(x,y)=\frac{\exp(ik|x-y|)}{4\pi|x-y|},\quad x,y\in\R^3,\quad x\not=y.
\enn
The integral operators $S,K,K':H^{-1/2}(\pa D)\to H^{1/2}(\pa D)$, $T:H^{1/2}(\pa D)\to H^{-1/2}(\pa D)$
and $S_m:L^2(D)\to L^2(D)$ are bounded operators (see \cite{CK,KG,M}).

The following factorization lemma is fundamental in the proof of the main Theorem \ref{main}.

\begin{lemma}\label{factor}
(a) For a sound-soft obstacle, we have the factorization
\be\label{f-a}
F=-4\pi GS^*G^*,
\en
where $G:H^{1/2}(\pa D)\rightarrow L^2(\Sp^2)$ is the data-to-pattern operator which maps the
Dirichlet boundary value of the radiating solution $v$ to the Helmholtz equation (\ref{he})
onto the far-field pattern $v^\infty$ and is compact, one-to-one with dense range in $L^2(\Sp^2)$.

(b) For an impedance obstacle with boundary condition ${\pa u}/{\pa\nu}+\eta u=0$ on $\pa D$ with
$\eta\in L^\infty(\pa D)$ and $\I(\eta)\ge0$, we have
\be\label{f-b}
F=-4\pi G_{imp}T^*_{imp}G^*_{imp},
\en
where $G_{imp}:H^{-1/2}(\pa D)\rightarrow L^2(\Sp^2)$ is the data-to-pattern operator which maps the
impedance boundary value of the radiating solution $v$ to the Helmholtz equation (\ref{he}) onto
the far-field pattern $v^\infty$ and is compact, one-to-one with dense range in $L^2(\Sp^2)$
and $T_{imp}:H^{1/2}(\pa D)\rightarrow H^{-1/2}(\pa D)$ is given by
\ben
T_{imp}=T+i\I(\eta) I+K'\bar\eta+\eta K+\eta S\bar\eta.
\enn

(c) For an inhomogeneous medium with real-valued index of refraction $n\in L^\infty(D)$ and
$m=n-1>0$ (or $m=n-1<0$) in $D$, we have
\be\label{f-c}
F=4\pi k^2G_mS_m^*G^*_m,
\en
where $G_m:L^2(D)\rightarrow L^2(\Sp^2)$ is the data-to-pattern operator defined by $G_mf=v^\infty$
with $v^\infty$ being the far-field pattern of the radiating solution $v$ of
\ben
\Delta v+k^2nv=-mf\quad\mbox{in}\;\;\R^3.
\enn
The data-to-pattern operator $G_m$ is injective.
\end{lemma}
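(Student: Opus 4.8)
The plan is to derive each of the three factorizations from the representation of the scattered field as a layer potential (for the obstacle cases) or a volume potential (for the medium case), and then to read off the far-field operator by applying the far-field asymptotics to that representation. First I would treat case (a): for a sound-soft obstacle, the scattered field $u^s(\cdot,d)$ can be written (using the single-layer ansatz, or more robustly, using the solution operator of the exterior Dirichlet problem) so that $u^\infty(\hat x,d)$ equals $G$ applied to the Dirichlet trace $-u^i(\cdot,d)|_{\pa D}=-e^{ik(\cdot)\cdot d}$ on $\pa D$, where $G$ is the data-to-pattern operator. The key algebraic observation is that the incident field $e^{ikx\cdot d}=4\pi\,\overline{(\text{far field of }\Phi_k(x,\cdot))}$, so that the Herglotz operator $g\mapsto\int_{\Sp^2}e^{ik(\cdot)\cdot d}g(d)\,ds(d)$ restricted to $\pa D$ is, up to the factor $-4\pi$, the adjoint $G^*$ composed appropriately, or more precisely that the map $g\mapsto$ (Herglotz wave with density $g$, traced on $\pa D$) is $-4\pi$ times the adjoint of $G$ with respect to the $L^2(\Sp^2)$ and $H^{1/2}$--$H^{-1/2}$ dualities, after identifying $S^*$ as the middle operator. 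Combining: $Fg$ is the far-field of the scattered wave with Dirichlet data $=$ minus the traced Herglotz wave, i.e. $F = G\circ(-4\pi)\,S^*\,G^*$ up to the bookkeeping of which trace appears, giving \eqref{f-a}. The injectivity and denseness of the range of $G$ follow from Rellich's lemma and a standard unique continuation / Holmgren argument, exactly as in \cite{KG}.

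For case (b), the plan is identical in structure but with the Dirichlet trace replaced by the impedance trace $(\pa/\pa\nu+\eta)u=0$ on $\pa D$: the scattered field solves the exterior impedance problem with boundary data $-(\pa/\pa\nu+\eta)u^i$, and $G_{imp}$ maps this impedance datum to the far-field pattern. The only extra work is to compute the middle operator: one represents the solution of the impedance problem via a combined single-layer density, and the resulting sesquilinear form on $H^{1/2}(\pa D)$ picks up, besides the hypersingular operator $T$, the zeroth-order term $i\,\I(\eta)I$ coming from the imaginary part of $\eta$ and the cross terms $K'\bar\eta+\eta K+\eta S\bar\eta$ coming from the impedance coupling; this is a routine but slightly tedious computation using the jump relations for $S,K,K',T$. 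The result is \eqref{f-b} with $T_{imp}=T+i\I(\eta)I+K'\bar\eta+\eta K+\eta S\bar\eta$.

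For case (c), the medium problem, I would use the Lippmann--Schwinger equation: with $m=n-1$ supported in $\ov D$, the scattered field satisfies $u^s = -k^2\int_D\Phi_k(\cdot,y)n(y)u(y)\,dy$ (or the equivalent form with $m$), so the far-field pattern is $G_m$ applied to $mu$, where $G_m f = v^\infty$ is the far-field of the radiating solution of $\Delta v+k^2 nv=-mf$. Writing the volume-integral reformulation $\phi\mapsto \phi/m - k^2\int_D\Phi_k(\cdot,y)\phi(y)\,dy = S_m\phi$ as the middle operator and again using $e^{ikx\cdot d}\propto\overline{\text{far field of }\Phi_k(x,\cdot)}$ to identify the Herglotz-to-$L^2(D)$ map with $4\pi k^2 G_m^*$, one arrives at $F = 4\pi k^2 G_m S_m^* G_m^*$, which is \eqref{f-c}; injectivity of $G_m$ again follows from Rellich's lemma together with unique continuation (here one uses that $m\neq0$ on its support), as in \cite[Theorem 6.16]{Kirsch}.

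The main obstacle, I expect, will not be any single factorization in isolation but rather getting the adjoints and constants exactly right: one must be careful that $G$ is defined on the boundary-trace space while its adjoint $G^*$ lands in $L^2(\Sp^2)$, that the duality pairing used to define $G^*$ (the $L^2$ pairing versus the $H^{1/2}$--$H^{-1/2}$ duality, possibly with a conjugation) is used consistently, and that the reciprocity relation $u^\infty(\hat x,d)=u^\infty(-d,-\hat x)$ is invoked to convert between "far field of a layer/volume potential" and "trace of a Herglotz wave." Once these identifications are fixed, each of \eqref{f-a}, \eqref{f-b}, \eqref{f-c} drops out; the mapping properties of $G$, $G_{imp}$, $G_m$ (compactness, injectivity, dense range) are then cited from \cite{KG,Kirsch}, with the injectivity proofs relying, as always, on Rellich's lemma and unique continuation.
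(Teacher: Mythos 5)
Your proposal is consistent with the paper, but note that the paper does not actually prove Lemma \ref{factor} at all: its entire proof is a citation, namely \cite[Theorem 1.15]{KG} for (a), \cite[Theorem 2.6]{KG} for (b), and \cite[Theorem 6.28]{Kirsch} for (c). What you have written is a reconstruction of the standard derivations contained in those references, and in outline it is correct: the factorization in each case comes from composing the data-to-pattern operator $G$ with the incident-trace (Herglotz or volume) operator $H$, and the identity $e^{ikx\cdot d}=4\pi\,\overline{\Phi_k^\infty(d,x)}$ is precisely what identifies $H$ with $4\pi S^*G^*$ (resp.\ $4\pi k^2 S_m^* G_m^*$) via the duality pairing. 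Two bookkeeping points where your sketch is loose: first, the Lippmann--Schwinger representation should read $u^s(x)=k^2\int_D\Phi_k(x,y)\,m(y)u(y)\,dy$ with the contrast $m=n-1$ and a plus sign (your version with $-k^2$ and $n$ in place of $m$ is not the equivalent form); second, for (b) the computation of the middle operator $T_{imp}=T+i\I(\eta)I+K'\bar\eta+\eta K+\eta S\bar\eta$ in \cite{KG} is not obtained from a single combined-layer ansatz alone but from an identity for the sesquilinear form $\langle\varphi,T_{imp}\psi\rangle$ built out of the exterior impedance solution operator, which is where the conjugates $\bar\eta$ and the term $i\I(\eta)I$ come from. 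Since the paper delegates all of this to the literature, the practical difference between your route and the paper's is that you re-derive what the paper cites; if you intend your argument to stand alone, the place where real care is needed is exactly the one you flag, namely fixing the duality in which $G^*$ is taken so that the adjoints $S^*$, $T_{imp}^*$, $S_m^*$ and the constants $-4\pi$ and $4\pi k^2$ come out as stated.
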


\begin{proof}
(a) was proved in \cite{KG} as Theorem 1.15 (see also \cite[Theorem 3.29]{CK}), (b) was shown in \cite{KG}
as Theorem 2.6, and (c) was proved in \cite{Kirsch} as Theorem 6.28 (see also \cite[Theorem 10.12]{CK}).
\end{proof}

To proceed further we need to collect some properties of the middle operators in the factorization of
the far-field operator in Lemma \ref{factor}.

\begin{lemma}\label{middle}
Let $S_i,T_i$ be defined similarly as $S,T$, respectively with $k=i$ and let $S_0:L^2(D)\rightarrow L^2(D)$
be given by
\ben
(S_0\varphi)(x):=\frac{\varphi(x)}{m(x)},\quad x\in D.
\enn
Then the following statements are true:

$(1)$ $S_i$ is self-adjoint with respect to $L^2(\pa D)$ and coercive.

$(2)$ $S-S_i:H^{-1/2}(\pa D)\rightarrow H^{1/2}(\pa D)$ is compact.

$(3)$ $-T_i$ is self-adjoint with respect to $L^2(\pa D)$ and coercive.

$(4)$ $T-T_i$ is compact from $H^{1/2}(\pa D)$ to $H^{-1/2}(\pa D)$.

$(5)$ $T-T_i+K'\bar\eta+\eta K+\eta S\bar\eta:H^{1/2}(\pa D)\rightarrow H^{-1/2}(\pa D)$ is compact.

$(6)$ $S_0$ is coercive for the case $m(x)=n(x)-1\ge c_1$ in $D$ $(-S_0$ is coercive for the case
$m(x)=n(x)-1\le-c_1$ in $D)$ for some constant $c_1>0$.

$(7)$ $S_m-S_0:L^2(D)\rightarrow L^2(D)$ is compact.
\end{lemma}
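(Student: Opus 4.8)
The statements split naturally into three groups, and I would organize the proof accordingly. The first group consists of (1)--(2) about the single-layer operator $S$ and its counterpart $S_i$ at the imaginary wave number $k=i$. For (1), I would recall the classical Gårding-type inequality: since $\Phi_i(x,y) = e^{-|x-y|}/(4\pi|x-y|)$ is the kernel of $(-\Delta+1)^{-1}$, the operator $S_i$ is exactly the trace of the Newtonian-type potential, and a standard computation using Green's identity in the interior and exterior of $D$ shows that $\langle S_i\varphi,\varphi\rangle$ equals the full-space $H^1$-norm squared of the single-layer potential $\tilde v$ with density $\varphi$, whence $\langle S_i\varphi,\varphi\rangle \ge c\,\|\varphi\|_{H^{-1/2}(\pa D)}^2$; self-adjointness follows from the symmetry of the real kernel. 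For (2), I would subtract the two kernels: $\Phi_k(x,y)-\Phi_i(x,y)$ has a weaker singularity than $|x-y|^{-1}$ (the leading singularities cancel), so the difference operator maps $H^{-1/2}(\pa D)$ into $H^{3/2}(\pa D)$, hence is compact into $H^{1/2}(\pa D)$ by the Rellich embedding. This is essentially \cite[Chapter 3]{CK} or \cite{M}.

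For the second group (3)--(5) concerning the hypersingular operator $T$, the argument for (3) and (4) parallels the above but is technically more delicate because $T$ is order $+1$: coercivity of $-T_i$ up to the sign again comes from an energy identity (now $-\langle T_i\psi,\psi\rangle$ reproduces the $H^1$-energy of the double-layer potential with density $\psi$), and $T-T_i$ is of lower order by kernel cancellation, hence compact $H^{1/2}\to H^{-1/2}$; both facts are in \cite{KG,M}. Statement (5) then follows immediately from (4) together with the mapping properties of $K',K,S$: since $\eta\in L^\infty(\pa D)$ is bounded, multiplication by $\eta$ (or $\bar\eta$) is bounded on each fractional Sobolev space of order $\le 0$ — or one uses that $S:H^{-1/2}\to H^{1/2}$ is in fact compact into $L^2$, and $K,K':H^{-1/2}\to H^{1/2}$ likewise, so each of the terms $K'\bar\eta$, $\eta K$, $\eta S\bar\eta$ is a composition in which at least one factor gains a full order of smoothness, making the composition compact from $H^{1/2}(\pa D)$ to $H^{-1/2}(\pa D)$; adding the compact $T-T_i$ finishes it.

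The third group (6)--(7) is about the volume operators for the medium case. For (6), this is just a pointwise multiplication estimate: if $m(x)\ge c_1>0$ a.e.\ in $D$, then $\langle S_0\phi,\phi\rangle_{L^2(D)} = \int_D |\phi|^2/m \ge (1/\|m\|_{L^\infty})\|\phi\|_{L^2(D)}^2$, so $S_0$ is coercive; the sign-reversed case $m\le -c_1$ gives coercivity of $-S_0$ by the same one-line estimate. For (7), $S_m-S_0$ is the operator $\phi\mapsto -k^2\int_D\Phi_k(x,y)\phi(y)\,dy$, i.e.\ the volume potential restricted to $D$, which maps $L^2(D)$ into $H^2_{\mathrm{loc}}$ and hence compactly into $L^2(D)$ by Rellich; this is in \cite[Chapter 8]{CK} or \cite{Kirsch}. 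The only genuine obstacle is keeping the order-counting and the sign conventions for the energy identities in (1) and (3) straight — everything else is a citation or a one-line estimate — so I would state those two energy identities carefully and refer to the standard boundary-integral-equation literature for the kernel-cancellation compactness claims.
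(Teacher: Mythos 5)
Your proof is correct and follows the same standard boundary-integral arguments that the paper simply cites (Lemma 1.14 and Theorem 1.26 of Kirsch--Grinberg for (1)--(4), Theorem 6.30 of Kirsch's book for (6)--(7)), with (5) handled, as in the paper, by combining (4) with compactness of the remaining terms. One small caution on (5): multiplication by an $L^\infty(\pa D)$ function is \emph{not} in general bounded on $H^{-1/2}(\pa D)$ (by duality that would require boundedness on $H^{1/2}(\pa D)$), so of your two suggested routes only the second is valid --- factor each of $K'\bar\eta$, $\eta K$, $\eta S\bar\eta$ through the compact embeddings $H^{1/2}(\pa D)\hookrightarrow L^2(\pa D)\hookrightarrow H^{-1/2}(\pa D)$, where multiplication by $\eta$ acts boundedly on $L^2(\pa D)$ --- which is exactly what the paper's one-line argument for (5) amounts to.
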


\begin{proof}
Statements (1) and (2) are proved in \cite{CK,KG} (see Lemma 5.37 in \cite{CK} or Lemma 1.14 (c) and (d)
in \cite{KG}).

Statements (3) and (4) are proved in \cite{KG} (see Theorem 1.26 (e) and (f) in \cite{KG},
where different notations $N,N_i$ are used for $T,T_i$, respectively).

Statement (5) follows easily from (4) and the fact that $S,K,K'$ are compact from
$H^{1/2}(\pa D)$ to $H^{-1/2}(\pa D)$.

Statements (6) and (7) are shown in \cite{CK,Kirsch} for the case $m=n-1>0$ in $D$ (see Theorem 10.14
in \cite{CK} or Theorem 6.30 in \cite{Kirsch}). The case $m=n-1<0$ in $D$ can be shown similarly.
\end{proof}

By Lemma \ref{normal} the far-field operator $F$ is normal for the cases considered in this paper, so
$F$ has a countably infinite number of eigenvalues. However, we need to prove that $F$ has a countably
infinite number of non-zero eigenvalues. To this end, we have to show that $F$ has finite dimensional
null space.

\begin{lemma}\label{nullspace}
For a sound-soft obstacle or an impedance obstacle with real-valued impedance function
$\eta\in L^\infty(\pa D)$ and for an inhomogeneous medium with real-valued index of refraction
$n\in L^\infty(\R^3)$ satisfying that $n=1$ in $\R^3\se{D}$ and $n>1$ $(\mbox{or}\;\;n<1)$ in $D$,
the dimension of the null space of $F$ is finite.
\end{lemma}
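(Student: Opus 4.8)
The plan is to use the factorizations in Lemma \ref{factor} together with the structural properties of the middle operators in Lemma \ref{middle}. The key observation is that for each of the three cases the far-field operator factors as $F = c\, A M A^*$, where $A$ is the (compact, injective) data-to-pattern operator $G$, $G_{imp}$ or $G_m$, the constant $c$ is nonzero ($-4\pi$, $-4\pi$, or $4\pi k^2$), and $M$ is the middle operator $S^*$, $T^*_{imp}$, or $S_m^*$. Since $A$ is injective, the null space of $F$ is contained in the null space of $MA^*$; more precisely $g\in\mathrm{Null}(F)$ iff $A^*g\in\mathrm{Null}(M)$. Hence it suffices to show that the middle operator $M$ has finite-dimensional null space, and then to transport this back through $A^*$: since $\mathrm{Null}(F)=\{g: A^*g\in\mathrm{Null}(M)\}=(A^*)^{-1}(\mathrm{Null}(M))$ and $A^*$ has closed range (indeed, as $A$ has dense range, $A^*$ is injective), the map $A^*$ restricted to $\mathrm{Null}(F)$ is a linear injection into the finite-dimensional space $\mathrm{Null}(M)$, so $\dim\mathrm{Null}(F)\le\dim\mathrm{Null}(M)<\infty$.

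It thus remains to check, case by case, that the relevant middle operator has finite-dimensional kernel. For the sound-soft case this is $\mathrm{Null}(S^*)$, i.e. essentially $\mathrm{Null}(S)$ after taking adjoints: by Lemma \ref{middle}(1)--(2), $S = S_i + (S-S_i)$ where $S_i$ is coercive (hence boundedly invertible) and $S-S_i$ is compact, so $S$ is a Fredholm operator of index zero from $H^{-1/2}(\pa D)$ to $H^{1/2}(\pa D)$ and therefore has finite-dimensional null space; the same conclusion holds for $S^*$. For the impedance case, by Lemma \ref{middle}(3) and (5), $-T_{imp} = (-T_i) + \big(-(T-T_i) - K'\bar\eta - \eta K - \eta S\bar\eta - i\I(\eta)I\big)$ with $-T_i$ coercive and the remainder compact (note $\I(\eta)=0$ in our setting makes the $i\I(\eta)I$ term vanish, and in any case it is bounded, but for the Fredholm argument we only need compactness of the correction, which holds when $\I(\eta)=0$); hence $T_{imp}$ is Fredholm of index zero and $\mathrm{Null}(T^*_{imp})$ is finite-dimensional. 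For the medium case, by Lemma \ref{middle}(6)--(7), $S_m = S_0 + (S_m - S_0)$ with $\pm S_0$ coercive on $L^2(D)$ and $S_m - S_0$ compact, so again $S_m$ is Fredholm of index zero and $\mathrm{Null}(S_m^*)$ is finite-dimensional.

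The main obstacle is making the reduction "$\mathrm{Null}(F)$ injects into $\mathrm{Null}(M)$" fully rigorous, i.e. ensuring that $A^*$ is genuinely injective on all of $L^2(\Sp^2)$ so that $g\mapsto A^*g$ does not collapse part of $\mathrm{Null}(F)$. This is where the density of the range of $A$ (stated in Lemma \ref{factor} for $G$ and $G_{imp}$, and injectivity of $G_m$ for the medium case) is used: $A$ having dense range is equivalent to $A^*$ being injective. For the medium case Lemma \ref{factor}(c) only asserts injectivity of $G_m$ rather than dense range, so there the cleaner route is to observe directly that $\mathrm{Null}(F) = \mathrm{Null}(G_m S_m^* G_m^*)$; since $G_m$ is injective this equals $\mathrm{Null}(S_m^* G_m^*)$, and one still needs $G_m^*$ injective, which again follows from $G_m$ having dense range — a fact that can be extracted from the mapping properties used to prove Lemma \ref{factor}(c), or alternatively one can argue via the normality of $F$ (Lemma \ref{normal}): for a normal compact operator $\mathrm{Null}(F) = \mathrm{Null}(F^*)$ and the eigenspaces are mutually orthogonal, so it is enough to bound the total multiplicity of eigenvalues outside any disc $|z|\ge\epsilon$, which the Fredholm argument on $M$ supplies after noting that $F - \lambda I$ inherits a coercive-plus-compact splitting through the factorization for every $\lambda\neq 0$. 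I would present the orthogonality-free version first and fall back on the normality argument only if the dense-range property of $G_m$ needs to be invoked.
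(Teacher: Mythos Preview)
Your argument is correct for the obstacle cases and takes a genuinely different route from the paper's. The paper first observes that $Fg=0$ forces the Herglotz wave function $v_g$ to be an interior eigenfunction (a Dirichlet or impedance eigenfunction of $-\Delta$ in $D$ with eigenvalue $k^2$; for the medium it cites \cite[Theorem~8.16]{CK}, which is the analogous transmission-eigenvalue statement), and then bounds that eigenspace by identifying it with the kernel of a Fredholm boundary integral operator (specifically $T+K'\eta+\eta K+\eta S\eta$ in the impedance case) via Green's representation and jump relations. Your route through the factorization $F=c\,AM A^*$ is more unified: it treats all three cases on the same footing, avoids the detour through interior eigenfunctions, and makes the dependence on Lemma~\ref{middle} completely transparent. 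In the obstacle cases both arguments ultimately rest on the same coercive-plus-compact splitting, but yours is shorter. (One minor slip: your parenthetical that ``$A^*$ has closed range'' is false in general for compact $A$ of infinite rank; fortunately you only use injectivity of $A^*$, which does follow from the dense range of $A$.)

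The medium case has a real gap, which you rightly flag. You need $G_m^*$ injective, i.e.\ $G_m$ with dense range, but Lemma~\ref{factor}(c) asserts only injectivity of $G_m$. This matters because in the paper's overall logic the finite-dimensionality of $\mathrm{Null}(G_m^*)$ is \emph{derived from} the present lemma (see the proof of Lemma~\ref{one-way}), so you cannot borrow it without circularity. Your first workaround---extracting dense range of $G_m$ from the proof of \cite[Theorem~6.28]{Kirsch}---is the correct fix and is indeed available there; you should state and justify it explicitly rather than gesture at it. Your second fallback via normality, however, does not work as written: bounding the total multiplicity of eigenvalues on $\{|z|\ge\varepsilon\}$ is automatic for any compact operator and says nothing about the kernel at $z=0$, and a coercive-plus-compact splitting of $F-\lambda I$ for $\lambda\neq 0$ gives no control over $\mathrm{Null}(F)$. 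Drop that alternative and supply the dense-range argument for $G_m$ directly.
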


\begin{proof}
For the inhomogeneous medium case, the result was proved in \cite{CK} (see Theorem 8.16 in \cite{CK}).

We only give a proof for the impedance obstacle case since the proof for a sound-soft obstacle
is similar. Our proof follows a similar idea as in the proof of Lemma 2 in \cite{LP}.

Note first that if $Fg=0$ for $g\in L^2(\Sp^2)$ then the Herglotz wave function $v_g$ defined by
\ben
v_g(x)=\int_{\Sp^2}e^{ikx\cdot d}g(d)ds(d),\quad x\in\R^3
\enn
is an eigenfunction of the negative impedance-Laplacian in $D$ corresponding to the eigenvalue $k^2$.
Thus, and by the one-to-one correspondence between Herglotz wave functions and their kernels $g$ (see
\cite[Theorem 3.19]{CK}), the finiteness of the dimension of the null space of $F$ follows from that
of the eigenspace of the negative impedance-Laplacian in $D$ associated with the eigenvalue $k^2$.

We claim that the dimension of the eigenspace of the negative impedance-Laplacian in $D$ associated
with the eigenvalue $k^2$ is equal to the dimension of the kernel space of $T+K'\eta+\eta K+\eta S\eta$.
Now, by Lemma \ref{middle} we know that
\[T+K'\eta+\eta K+\eta S\eta=T_i+(T-T_i)+K'\eta+\eta K+\eta S\eta\]
is a Fredholm-type operator, so the dimension of the kernel space of $T+K'\eta+\eta K+\eta S\eta$ is
finite. Consequently, the dimension of the null space of $F$ is finite.

In fact, if $u$ is an eigenfunction of the negative impedance-Laplacian in $D$ corresponding to the
eigenvalue $k^2$, then from the Green's representation formula it follows that
\ben
u(x)&=&\int_{\pa D}\left\{\frac{\pa u}{\pa\nu}(y)\Phi_k(x,y)-u(y)\frac{\pa\Phi_k(x,y)}{\pa\nu(y)}\right\}ds(y)\\
&=&-\int_{\pa D}\left(\eta(y)\Phi_k(x,y)+\frac{\pa\Phi_k(x,y)}{\pa\nu(y)}\right)u(y)ds(y),\quad x\in D.
\enn
By the jump relations of the single and double layer potentials, we get
\ben
2u&=&u-2\int_{\pa D}\left(\frac{\pa\Phi_k(x,y)}{\pa\nu(y)}+\eta(y)\Phi_k(x,y)\right)u(y)ds(y),\quad x\in\pa D,\\
2\frac{\pa u}{\pa\nu}&=&-\eta u-2\frac{\pa}{\pa\nu}\int_{\pa D}\left(\frac{\pa\Phi_k(x,y)}{\pa\nu(y)}
+\eta(y)\Phi_k(x,y)\right)u(y)ds(y),\quad x\in\pa D.
\enn
Therefore, we have
\ben
0=2\left(\frac{\pa u}{\pa\nu}+\eta u\right)=-2\left(T+K'\eta+\eta K+\eta S\eta\right)u,
\enn
that is, $u$ is an element in the kernel space of $T+K'\eta+\eta K+\eta S\eta$.
Conversely, let $(T+K'\eta+\eta K+\eta S\eta)\varphi=0$ for $\varphi\in H^{1/2}(\pa D)$.
Define the combined double- and single-layer potential:
\ben
u(x):=\int_{\pa D}\left(\frac{\pa\Phi_k(x,y)}{\pa\nu(y)}+\eta(y)\Phi_k(x,y)\right)\varphi(y)ds(y),
\quad x\in\R^3\setminus\pa D.
\enn
By the jump relations of the layer potentials, it is easy to show that $u$ is an eigenfunction of
the negative impedance-Laplacian in $D$ corresponding to the eigenvalue $k^2$.
This means that the required claim is correct.
The proof is thus complete.
\end{proof}

Combining Lemmas \ref{normal} and \ref{nullspace} we can establish the following theorem.

\begin{theorem}\label{infinite}
For a sound-soft obstacle or an impedance obstacle with real-valued impedance function
$\eta\in L^\infty(\pa D)$ and for an inhomogeneous medium with real-valued index of refraction
$n\in L^\infty(\R^3)$ satisfying that $n=1$ in $\R^3\se{D}$ and $n>1$ $(\mbox{or}\;\;n<1)$ in $D$,
the far field operator $F$ has a countably infinite number of eigenvalues accumulating only at $0$,
and the multiplicity of each eigenvalue is finite. Furthermore, the eigenfunctions form an orthonormal
basis of $L^2(\Sp^2)$.
\end{theorem}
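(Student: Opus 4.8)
The plan is to read the theorem off the general spectral theorem for compact normal operators, feeding in the two structural facts already established: the normality of $F$ (Lemma~\ref{normal}) and the finite dimensionality of $\ker F$ (Lemma~\ref{nullspace}). The only genuinely new content to extract is that $F$ has \emph{infinitely many non-zero} eigenvalues, and this is precisely the point at which the bound on $\dim\ker F$ enters; everything else is automatic from compactness and normality.

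First I would recall that $F:L^2(\Sp^2)\to L^2(\Sp^2)$ is compact, since its kernel $u^\infty(\hat x,d)$ is analytic, hence continuous, on $\Sp^2\times\Sp^2$ (this was already noted at the start of Section~\ref{spectrum}). By Lemma~\ref{normal}, in all three cases under consideration $F$ satisfies the identity $F-F^*-\frac{ik}{2\pi}F^*F=0$ and is therefore normal. Applying the spectral theorem for compact normal operators on the separable Hilbert space $L^2(\Sp^2)$, there is an orthonormal basis $\{g_j\}_{j\in\N}$ of $L^2(\Sp^2)$ with $Fg_j=\lambda_j g_j$, where the eigenvalues $\lambda_j\in\C$ (each lying on the circle $(\ref{circle})$ or equal to $0$, by Remark~\ref{re-circle}) form an at most countable set whose only possible accumulation point is $0$, and each non-zero eigenvalue has finite multiplicity. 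Together with Lemma~\ref{nullspace} (which makes even the multiplicity of $0$, if it is an eigenvalue, finite), this already gives every assertion of the theorem except that the number of non-zero eigenvalues is countably infinite.

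To obtain the latter I would argue by contradiction. Suppose $F$ had only finitely many non-zero eigenvalues. Since $L^2(\Sp^2)$ is infinite dimensional, the orthonormal eigenbasis $\{g_j\}_{j\in\N}$ is countably infinite, so all but finitely many of the $g_j$ would satisfy $Fg_j=0$; hence $\ker F$ would contain infinitely many orthonormal vectors and be infinite dimensional, contradicting Lemma~\ref{nullspace}. Therefore $F$ possesses a countably infinite number of non-zero eigenvalues, accumulating only at $0$, each of finite multiplicity, and the associated eigenfunctions, completed if necessary by an orthonormal basis of the finite-dimensional space $\ker F$, form an orthonormal basis of $L^2(\Sp^2)$.

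I do not expect a real obstacle in this final assembly: the substantive work — the optical-theorem identity of Lemma~\ref{normal}, the factorizations of Lemma~\ref{factor} with their Fredholm-type middle operators (Lemma~\ref{middle}), and the reduction of $\ker F$ to the kernel of a Fredholm operator / a finite-dimensional interior eigenspace in Lemma~\ref{nullspace} — has already been carried out. The single point requiring care is not to conflate ``countably many eigenvalues'' (immediate from compactness) with ``countably many \emph{non-zero} eigenvalues'': the gap between the two is controlled exactly by the finiteness of $\dim\ker F$, which is why Lemma~\ref{nullspace} is indispensable here.
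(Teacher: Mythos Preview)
Your proposal is correct and follows essentially the same route as the paper: both combine the normality from Lemma~\ref{normal} with the spectral theorem for compact normal operators, and then invoke Lemma~\ref{nullspace} to rule out an infinite-dimensional zero eigenspace. The only cosmetic difference is that the paper phrases the use of Lemma~\ref{nullspace} as ``no eigenvalue can have infinite multiplicity'' (since compactness would force such an eigenvalue to be $0$), whereas you phrase it as ``there must be infinitely many non-zero eigenvalues''; these are logically equivalent formulations of the same step.
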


\begin{proof}
From Lemma \ref{normal} and the spectral theorem for normal operators, we know that $F$ has a
countably infinite number of eigenvalues $\{\lambda_n\}_{n=1}^\infty$ with the corresponding
eigenfunctions $\{g_n\}_{n=1}^\infty$ forming an orthonormal basis of $L^2(\Sp^2)$.
We now prove that the multiplicity of each eigenvalue is finite.
Suppose, to the contrary, that the multiplicity of $\lambda_n$ is infinite for some $n\in\Z$,
that is, $\mbox{dim}[\mbox{Ker}(\lambda_n I-F)]=\infty$.
Then the compactness of $F$ implies that $\lambda_n=0$. But this is impossible
by Lemma \ref{nullspace}. From the compactness of the far field operator $F$ again, the
unique accumulation point of the eigenvalues $\{\lambda_n\}_{n=1}^\infty$ is $0$.
The proof is thus complete.
\end{proof}

\subsection{Limits of the normalized eigenvalues}

Eckmann and Pillet \cite{EP} first established the limit of the normalized eigenvalues of
the far-field operators for a piecewise smooth sound-soft obstacle in $\R^2$
whether or not $k^2$ is an interior Dirichlet eigenvalue of the obstacle,
in the context of quantum billiards, by using a variational principle.
Similar results are given in the monograph \cite{KG} for sound-soft or sound-hard obstacles in $\R^3$
in order to characterize the obstacle by using the eigensystem of the far-field operator
when $k^2$ is not an eigenvalue of the underlying boundary value problem in the obstacle
(see the proof of Theorem 1.23 of \cite{KG}).
Similar results are also presented in \cite{Kirsch} for inhomogeneous media with real-valued indices
of refraction in $\R^3$ in the case when $k^2$ is not an eigenvalue of the underlying interior
transmission problem in the support of the contrast of the inhomogeneous medium (see Lemma 6.34
of \cite{Kirsch}).
In this subsection, we extend these results to both the case of sound-soft or impedance obstacles
with real-valued impedance coefficients and the case with an inhomogeneous medium with real-valued
index of refraction no matter whether or not $k^2$ is an eigenvalue of the underlying boundary value
problem in the obstacle or the underlying interior transmission problem in the support of the contrast
of the inhomogeneous medium. To this end, we give the following general result on the property of the
eigenvalues of a linear compact normal operator having a special factorization form.

\begin{theorem}\label{tend1}
Let $H$ and $X$ be Hilbert spaces with inner products $(\cdot,\cdot)$, let $X^*$ be the dual space
of $X$ and assume that $F:H\rightarrow H$ is a linear compact normal operator satisfying
\[F=GM^*G^*,\]
where $G:X\rightarrow H$ and $M:X^*\rightarrow X$ are bounded linear operators and
$G^*: H\rightarrow X^*$ is the adjoint of $G$ defined by
\[\langle G^*g,\varphi\rangle=(g,G\varphi),\quad\forall\varphi\in X,g\in H,\]
in terms of the sesqui-linear duality pairing of $X^*$ and $X$.
Assume further that $G^*$ has a finite dimensional null space and $M=M_0+C$ for some compact
operator $C$ and some self-adjoint operator $M_0$ which is coercive on $G^*(H)$ in the sense that
there exists $c_0>0$ such that
\ben
\langle\phi,M_0\phi\rangle\ge c_0\|\phi\|^2\qquad\mbox{for all}\;\;\phi\in G^*(H).
\enn
Then $F$ has at most only a finite number of eigenvalues whose real parts are negative.
\end{theorem}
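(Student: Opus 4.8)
The plan is to show that the normalized eigenvalues of $F$ can fail to have negative real part only finitely often by transporting the coercivity of $M_0$ through the factorization onto the eigenfunctions of $F$. Let $\lambda\neq 0$ be an eigenvalue of $F$ with eigenfunction $g\in H$, $\|g\|=1$, so that $GM^*G^*g=\lambda g$. Pairing with $g$ gives $\lambda=(GM^*G^*g,g)=\langle M^*G^*g,G^*g\rangle=\overline{\langle G^*g,M^*G^*g\rangle}$; writing $\phi:=G^*g\in G^*(H)$, we get $\bar\lambda=\langle\phi,M\phi\rangle=\langle\phi,M_0\phi\rangle+\langle\phi,C\phi\rangle$. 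Since $M_0$ is self-adjoint on $G^*(H)$ (so $\langle\phi,M_0\phi\rangle$ is real) and coercive there, $\Rt(\bar\lambda)\ge c_0\|\phi\|^2+\Rt\langle\phi,C\phi\rangle$. Hence $\Rt(\lambda)<0$ forces $c_0\|\phi\|^2<-\Rt\langle\phi,C\phi\rangle\le\|C\phi\|\,\|\phi\|$, i.e. $\|\phi\|=\|G^*g\|\le c_0^{-1}\|C\phi\|\le c_0^{-1}\|CG^*g\|$.

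First I would reduce to the orthonormal eigensystem: by normality of $F$, the nonzero eigenvalues $\{\lambda_n\}$ come with an orthonormal set of eigenfunctions $\{g_n\}$. Suppose for contradiction that infinitely many of the $\lambda_n$ have negative real part; pick such a subsequence, still denoted $\{g_n\}$, which converges weakly to $0$ in $H$ (being orthonormal). The inequality above reads $c_0\|G^*g_n\|\le\|CG^*g_n\|$ for all $n$. Now $G^*:H\to X^*$ is bounded, so $\{G^*g_n\}$ is bounded in $X^*$; since $C$ is compact (from $X^*$ to $X$, as the middle operator lives there), along a further subsequence $CG^*g_n$ converges strongly in $X$. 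But $G^*g_n\rightharpoonup 0$ weakly in $X^*$ (because $g_n\rightharpoonup 0$ and $G^*$ is bounded, hence weak-weak continuous), so $CG^*g_n\to 0$ strongly by compactness of $C$. Then $c_0\|G^*g_n\|\le\|CG^*g_n\|\to 0$, so $G^*g_n\to 0$ strongly in $X^*$.

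To close the argument I would use the finite dimensionality of the null space of $G^*$. Let $N:=\mathrm{Ker}(G^*)$, a finite dimensional subspace of $H$, and write $g_n=p_n+q_n$ with $p_n\in N$, $q_n\in N^\perp$. On the finite dimensional space $N$ weak convergence is strong convergence, so $p_n\to 0$; combined with $\|g_n\|=1$ this gives $\|q_n\|\to 1$. On the other hand $G^*$ restricted to $N^\perp$ is injective with closed range (since $\mathrm{Ker}(G^*)$ splits off a finite dimensional piece, $G^*|_{N^\perp}$ is bounded below on $N^\perp$ — this is the one point needing a short justification, via the bounded-inverse theorem applied to $G^*|_{N^\perp}:N^\perp\to\overline{\mathrm{Ran}\,G^*}$, or more elementarily since a bounded operator with finite dimensional kernel on a Hilbert space is bounded below on the orthogonal complement of its kernel when its range is closed; if the range need not be closed one instead argues directly that $\|G^*g_n\|\to0$ together with $g_n\rightharpoonup0$ and $\dim N<\infty$ is already contradictory, because $g_n-P_Ng_n$ then satisfies $G^*(g_n-P_Ng_n)=G^*g_n\to0$ while $\|g_n-P_Ng_n\|\to1$, contradicting injectivity of $G^*$ on $N^\perp$ in the weak limit). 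In any case $\|G^*g_n\|=\|G^*q_n\|\ge c\|q_n\|$ for some $c>0$ eventually, so $\|G^*g_n\|\ge c/2>0$ for large $n$, contradicting $G^*g_n\to0$. Therefore only finitely many eigenvalues of $F$ have negative real part.

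The main obstacle is the last step: extracting a quantitative lower bound $\|G^*g\|\gtrsim\mathrm{dist}(g,\mathrm{Ker}\,G^*)$ from the mere finite dimensionality of $\mathrm{Ker}(G^*)$, since a bounded injective operator on an infinite dimensional Hilbert space need not be bounded below. I expect this is handled by noting that in each of the three concrete factorizations of Lemma \ref{factor} the relevant operator $G^*$ (namely $S^*G^*$, $T^*_{imp}G^*_{imp}$, or $k^2S_m^*G^*_m$ composed appropriately, or directly the data-to-pattern adjoint) has the needed structure — indeed $G_{\mathrm{sound}}$, $G_{imp}$, $G_m$ are all injective with dense range, so one works on $\overline{\mathrm{Ran}(G^*)}$ — and the finite dimensional kernel enters exactly through Lemma \ref{nullspace}. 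Alternatively, and more robustly, one avoids boundedness-below entirely by running the contradiction purely with weak convergence: $g_n\rightharpoonup0$, $\dim\mathrm{Ker}(G^*)<\infty$, and $\|G^*g_n\|\to0$ together imply, after subtracting the (strongly convergent) projection onto the kernel, a unit-norm sequence in $\mathrm{Ker}(G^*)^\perp$ whose image under the injective map $G^*$ tends to zero in norm while the sequence itself tends weakly to zero — which, since $G^*$ is injective on that complement and weak limits of kernels of injective maps behave well, produces the sought contradiction. I would present the argument in this second, self-contained form.
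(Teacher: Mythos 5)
Your first step is exactly the paper's: pairing $Fg_n=\lambda_ng_n$ with $g_n$, using $M=M_0+C$ and the coercivity of $M_0$ on $G^*(H)$ to get $c_0\|G^*g_n\|\le\|CG^*g_n\|$ whenever $\Rt(\lambda_n)<0$. The gap is in how you try to close the argument. From $g_n\rightharpoonup 0$ and compactness of $C$ you correctly deduce $\|CG^*g_n\|\to0$ and hence $\|G^*g_n\|\to 0$, but this conclusion carries no contradiction: in all three applications of Lemma \ref{factor} the data-to-pattern operator $G$ is compact, so $G^*$ is compact and $\|G^*g_n\|\to0$ holds automatically for \emph{every} orthonormal sequence, regardless of the signs of the eigenvalues. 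Your primary route to a contradiction requires $G^*$ to be bounded below on $\mathrm{Ker}(G^*)^\perp$, which fails for any compact operator of infinite rank; your proposed fallback (``$g_n\rightharpoonup0$, $\dim\mathrm{Ker}(G^*)<\infty$ and $\|G^*g_n\|\to0$ are already contradictory'') is simply false — take $G^*$ diagonal with entries $1/n$ and $g_n$ the standard basis. Finite dimensionality of the kernel plus injectivity on the complement gives no quantitative lower bound, and no contradiction follows from these three facts alone.

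The idea you are missing is the one the paper uses to avoid this dead end: the inequality $c_0\|\phi\|<\|C\phi\|$ holds not only for each $\phi=G^*g_n$ but for every nonzero $\phi$ in the linear span of the $G^*g_n$, because for any linear combination $h=\sum c_kg_{n_k}$ of the orthonormal eigenvectors one still has $\Rt(Fh,h)=\sum|c_k|^2\Rt(\lambda_{n_k})<0$, and the same coercivity computation applies to $G^*h$. Since $\mathrm{Ker}(G^*)$ is finite dimensional one can extract infinitely many linearly independent $f_{n_k}=G^*g_{n_k}$, apply Gram--Schmidt \emph{inside their span in $X^*$} to obtain an orthonormal system $\{e_k\}$ satisfying the (scale-invariant) inequality $c_0=c_0\|e_k\|<\|Ce_k\|$, and then use $e_k\rightharpoonup0$ together with compactness of $C$ to force $\|Ce_k\|\to0$ — a genuine contradiction. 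The normalization to unit vectors that still converge weakly to zero is precisely what your argument lacks: the raw sequence $G^*g_n$ is weakly null but may shrink in norm, while the naively renormalized vectors $G^*g_n/\|G^*g_n\|$ need not be weakly null.
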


\begin{proof}
Suppose, to the contrary, that $F$ has a countably infinite number of eigenvalues whose real
parts are negative. By the spectral theorem for compact normal operators, there exists
an infinite number of orthonormal eigenelements $g_n\in H$ with corresponding eigenvalues
$\lambda_n$, $n\in\N$. Then we can choose a subsequence of $\{g_{n}\}_{n=1}^\infty$, which
we denote by $\{g_{n}\}_{n=1}^\infty$ again, such that $\Rt(\la_{n})<0$ for all $n$.
We may assume without loss of generality that $G^*(g_{n})\not=0$ since the dimension of the
null space of $G^*$ is finite. Define $f_{n}:=G^*g_{n}\in X^*$. Then
\ben
\Rt(Fg_{n},g_{n})&=&\Rt(GM^*G^*g_{n},g_{n})=\Rt\langle G^*g_{n},MG^*g_{n}\rangle\\
&=&\Rt\langle f_{n},Mf_{n}\rangle=\Rt\langle f_{n},M_0f_{n}\rangle +\Rt\langle
f_{n},Cf_{n}\rangle.
\enn
Thus, and since $\Rt(Fg_{n},g_{n})=\Rt(\lambda_{n})<0$, we obtain that
\ben
c_0\|f_{n}\|^2\leq\Rt\langle f_{n},M_0f_{n}\rangle<-\Rt\langle f_{n},Cf_{n}\rangle
\leq\left|\langle f_{n},Cf_{n}\rangle\right|\leq\|f_{n}\|\|Cf_{n}\|.
\enn
Since $f_{n}=G^*g_{n}\not=0$, it follows that
\ben
c_0\|f_{n}\|<\|Cf_{n}\|\qquad\mbox{for}\;\;\mbox{all}\;\;n\in\N.
\enn
Noting that $(Fg_{n},g_{k})=\lambda_{n}\delta_{nk}$, we have
\ben
&&\Rt(F(c_{n}g_{n}+c_{k}g_{k}),c_{n}g_{n}+c_{k}g_{k})\\
&&\qquad\qquad=|c_{n}|^2\Rt(Fg_{n},g_{n})+|c_{k}|^2\Rt(Fg_{k},g_{k})<0,
\enn
where $n\not=k$ and $c_{n},c_{k}\in\C$ are arbitrary constants satisfying that
$|c_{n}|+|c_{k}|\not=0$. Again, since the dimension of the null space of $G^*$ is finite,
for $f_{n_1}:=f_{1}=G^*g_{1}$ we can choose some element from $\{g_{n}\}_{n=2}^\infty$,
which we denote by $g_{n_2}$, such that
$$
c_{n_1}f_{n_1}+c_{n_2}f_{n_2}=c_{n_1}G^*g_{1}+c_{n_2}G^*g_{n_2}\not=0
$$
for any constants $c_{n_1},c_{n_2}\in\C$ with $|c_{n_k}|+|c_{n_l}|\not=0$, that is,
$f_{n_1}$ and $f_{n_2}$ are linearly independent.
Noting that $\Rt(F(c_{n_1}g_{n_1}+c_{n_2}g_{n_2}),c_{n_1}g_{n_1}+c_{n_2}g_{n_2})<0$, and
by the same argument as above, it follows that
\ben
c_0\|c_{n_1}f_{n_1}+c_{n_2}f_{n_2}\|<\|C(c_{n_1}f_{n_1}+c_{n_2}f_{n_2})\|.
\enn
Repeating the above process, we can choose $g_{n_k}$, $k=1,\cdots,N$ such that
\ben
\sum_{k=1}^Nc_{n_k}f_{n_k}\not=0,\qquad
c_0\left\|\sum_{k=1}^Nc_{n_k}f_{n_k}\right\|<\left\|C\sum_{k=1}^Nc_{n_k}f_{n_k}\right\|
\enn
for any $N<\infty$ and arbitrary constants $c_{n_k}\in\C$, $k=1,\cdots,N$, with
$\sum_{k=1}^N|c_{n_k}|\not=0$. Note that $\{f_{n_k}\}_{k=1}^N=\{G^*g_{n_k}\}_{k=1}^N$ are
linearly independent. Then, by the Gram-Schmidt orthonormalization process
(see \cite[pp. 110]{DM}) there exists an orthonormal system $\{e_{n_k}\}_{k=1}^\infty\subset X^*$
such that
\ben
\mbox{span}\{f_{n_1},...,f_{n_k}\}=\mbox{span}\{e_{n_1},...,e_{n_k}\},\quad\forall k\in\N
\enn
and
\ben
c_0=c_0\|e_{n_k}\|<\|Ce_{n_k}\|.
\enn
Since $e_{n_k}$ is weakly convergent to zero in $X^*$ as $k\to\infty$ (see \cite[pp. 112]{DM}),
and by the compactness of $C$, we have
\ben
c_0=c_0\|e_{n_k}\|<\|Ce_{n_k}\|\rightarrow0.
\enn
This is a contradiction. The proof is thus complete.
\end{proof}

Making use of Theorem \ref{tend1} in conjunction with Lemmas \ref{factor} and \ref{middle} and
Theorem \ref{infinite}, we can prove the following lemma.

\begin{lemma}\label{one-way}
Suppose $\{(\lambda_n,g_n)\}_{n=1}^\infty$ is the eigensystem of the far field operator $F$
such that $\{g_n\}_{n=1}^\infty$ forms an orthonormal basis of $L^2(\Sp^2)$. Then we have

(a) In the sound-soft case, there only exist finitely many eigenvalues $\lambda_n$ satisfying
that $\Rt(\lambda_n)>0$;

(b) In the non-absorbing impedance case, there only exist finitely many eigenvalues $\lambda_n$
satisfying that $\Rt(\lambda_n)<0$;

(c) In the non-absorbing medium case, there only exist finitely many eigenvalues $\lambda_n$
satisfying that
\ben
\Rt(\lambda_n)\left\{\begin{array}{ll}
>0 &\mbox{if}\;\; m\le-c_1\;\;\mbox{in}\;\;D,\\
<0 & \mbox{if}\;\;m\ge c_1\;\;\mbox{in}\;\;D
\end{array}\right.
\enn
for some constant $c_1>0$.
\end{lemma}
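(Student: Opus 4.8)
The plan is to apply Theorem \ref{tend1} (and a sign-reversed variant of it) to each of the three factorizations collected in Lemma \ref{factor}. The common template is: identify the Hilbert spaces $H$, $X$, the bounded operator $G$ (the data-to-pattern operator), and the ``middle'' operator $M$; verify that $G^*$ has finite-dimensional null space; and split $M=M_0+C$ with $M_0$ self-adjoint and coercive (up to sign) on $G^*(H)$ and $C$ compact. Once these hypotheses are checked, Theorem \ref{tend1} gives that $F$ has at most finitely many eigenvalues with negative real part; applying the same theorem to $-F$ (equivalently replacing $M_0$ by $-M_0$) gives the statement about finitely many eigenvalues with positive real part. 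The normality of $F$ and the fact that $F$ has infinitely many nonzero eigenvalues come from Theorem \ref{infinite}, so there is no issue about the hypotheses of Theorem \ref{tend1} being vacuously satisfied.

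I would carry this out case by case. For (a), the sound-soft case, Lemma \ref{factor}(a) gives $F=-4\pi\,G S^* G^*$, so in the notation of Theorem \ref{tend1} we have (after absorbing the constant) $M=-4\pi S^*$, i.e. $M_0=-4\pi S_i^*$ and $C=-4\pi(S-S_i)^*$. By Lemma \ref{middle}(1), $S_i$ is self-adjoint and coercive, hence $-S_i$ is self-adjoint and \emph{negatively} coercive; so $-4\pi S_i^*$ is negatively coercive and the sign-reversed version of Theorem \ref{tend1} applies, yielding only finitely many $\lambda_n$ with $\Rt(\lambda_n)>0$. By Lemma \ref{middle}(2), $S-S_i$ is compact, so $C$ is compact. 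Injectivity (hence finite-dimensional, in fact trivial, null space) of $G^*$ follows from $G$ having dense range, which is part of Lemma \ref{factor}(a). For (b), the non-absorbing impedance case, $\I(\eta)=0$ so $T_{imp}=T+K'\eta+\eta K+\eta S\eta$ and Lemma \ref{factor}(b) gives $F=-4\pi\,G_{imp}T_{imp}^*G_{imp}^*$; here $M_0=-4\pi T_i^*$, which by Lemma \ref{middle}(3) ($-T_i$ self-adjoint and coercive) is self-adjoint and \emph{positively} coercive, and $C=-4\pi(T-T_i+K'\eta+\eta K+\eta S\eta)^*$ is compact by Lemma \ref{middle}(5). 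Theorem \ref{tend1} then gives only finitely many $\lambda_n$ with $\Rt(\lambda_n)<0$. For (c), the non-absorbing medium case, Lemma \ref{factor}(c) gives $F=4\pi k^2 G_m S_m^* G_m^*$, so $M_0=4\pi k^2 S_0^*$ and $C=4\pi k^2(S_m-S_0)^*$; by Lemma \ref{middle}(6), $S_0$ is coercive when $m\ge c_1$ and $-S_0$ is coercive when $m\le-c_1$, while $S_m-S_0$ is compact by Lemma \ref{middle}(7). Applying Theorem \ref{tend1} (respectively its sign-reversed form) gives finitely many $\lambda_n$ with $\Rt(\lambda_n)<0$ when $m\ge c_1$ (respectively $\Rt(\lambda_n)>0$ when $m\le-c_1$). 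In all cases injectivity of $G^*$ is the density of the range of the corresponding data-to-pattern operator, guaranteed by Lemma \ref{factor} (with $G_m$ only asserted injective, which is all we need for $G_m^*$ to have dense range — but here we actually need $G_m^*$ to have finite-dimensional null space, so I should note that $G_m$ injective gives $G_m^*$ dense range, and it is $G^*$ whose null space must be finite; the roles are consistent because Theorem \ref{tend1} asks for $G^*$ to have finite-dimensional null space, which holds trivially when $G$ has dense range).

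One bookkeeping point deserves care: Theorem \ref{tend1} is stated with the factorization $F=GM^*G^*$ and concludes about eigenvalues with negative real part under $M_0$ \emph{positively} coercive on $G^*(H)$. To get the ``positive real part'' conclusions (cases (a) and the $m\le-c_1$ branch of (c)) I would either apply the theorem to $-F = G(-M)^*G^*$ with middle operator $-M_0$ positively coercive, or equivalently observe that the proof of Theorem \ref{tend1} goes through verbatim with all inequality signs reversed. I expect the main (very mild) obstacle to be exactly this sign/adjoint bookkeeping across the three factorizations — keeping straight which of $S_i$, $-S_i$, $T_i$, $-T_i$, $S_0$, $-S_0$ is coercive, and matching the overall sign of the scalar prefactor ($-4\pi$ in (a),(b) versus $+4\pi k^2$ in (c)) — rather than any genuine analytic difficulty, since every coercivity and compactness fact needed is already packaged in Lemmas \ref{middle} and \ref{factor} and the structural input is exactly Theorem \ref{tend1} together with Theorem \ref{infinite}.
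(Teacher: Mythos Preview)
Your approach is essentially the same as the paper's: apply Theorem \ref{tend1} (or its sign-reversed variant) to the factorizations in Lemma \ref{factor}, using Lemma \ref{middle} for the coercive-plus-compact splitting of the middle operator. The sign bookkeeping for (a), (b) and the two branches of (c) is handled correctly.

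There is, however, a genuine gap in your verification that $G^*$ has finite-dimensional null space in case (c). For (a) and (b) your argument is fine: Lemma \ref{factor} asserts that $G$ and $G_{imp}$ have dense range, so $G^*$ and $G_{imp}^*$ are injective. But for (c), Lemma \ref{factor}(c) asserts only that $G_m$ is \emph{injective}, not that it has dense range. You notice this yourself in the parenthetical, observe correctly that injectivity of $G_m$ gives only dense range of $G_m^*$, and then circle back to ``which holds trivially when $G$ has dense range'' --- but you have not established dense range for $G_m$, so the argument does not close.

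The paper fills this gap differently: from $F=G_mM^*G_m^*$ one has the trivial inclusion $\ker G_m^*\subseteq\ker F$ (if $G_m^*g=0$ then $Fg=0$), and $\ker F$ is finite-dimensional by Lemma \ref{nullspace}. Hence $\ker G_m^*$ is finite-dimensional. (The paper also records that $S_m$ is an isomorphism and $G_m$ is injective, which gives the reverse inclusion $\ker F\subseteq\ker G_m^*$, but only the trivial direction is needed here.) You should replace the dense-range argument in case (c) by this observation.
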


\begin{proof}
We only prove (c). The proof of (a) and (b) is similar.

We now apply Theorem \ref{tend1} to prove (c). To do this, let $H=L^2(\Sp^2)$, $X=L^2(D)$,
$G=G_m$, $M=4\pi k^2S_m$ with $M_0=4\pi k^2S_0$ and $C=4\pi k^2(S_m-S_0)$. Then $F=GM^*G^*$.
Now, by Lemma \ref{middle} (6) and (7) and Lemma \ref{factor} (c) it is known that $M$
satisfies the assumption in Theorem \ref{tend1} for the case $m(x)\ge c_1$ in $D$.
Further, by Theorem 6.30 (c), $S_m$ or equivalently $M$ is an isomorphism from $L^2(D)$ onto
itself. This, together with the fact that the far-field operator $F$ is compact and normal
(by Lemma \ref{normal}) and has finite dimensional null space (by Lemma \ref{nullspace}) and
$G$ is injective, implies that $G^*$ has finite dimensional null space.
Thus all the assumptions in Theorem \ref{tend1} are satisfied so, by Theorem \ref{tend1}
we have that the far-field operator $F$ has at most only a finite number of eigenvalues
$\lambda_n$ with $\Rt(\lambda_n)<0$.

For the case when $m(x)\le-c_1$ in $D$, $-F=G(-M)^*G^*$ and $-M=-4\pi k^2S_m$ with
$-M_0=-4\pi k^2S_0$ being coercive and $-C=-4\pi k^2(S_m-S_0)$ being compact.
Thus, by Theorem \ref{tend1}, $-F$ has at most only a finite number of eigenvalues $-\lambda_n$ with
$-\Rt(\lambda_n)<0$, or equivalently, $F$ has at most only a finite number of eigenvalues
$\lambda_n$ with $\Rt(\lambda_n)>0$. The proof is thus complete.
\end{proof}

We are now in a position to state and prove the main theorem of this section.

\begin{theorem}\label{eigentend}
Let $\{\lambda_n\}_{n=1}^\infty$ be the eigenvalues of the far-field operator $F$. Then
\ben
\lim_{n\rightarrow\infty}\frac{\lambda_n}{|\lambda_n|}=\left\{\begin{array}{ll}
-1 & \text{for a sound-soft obstacle},\\
1 & \text{for a non-absorbing impedance obstacle},\\
\text{sign}(m) & \text{for a non-absorbing medium with $|m|\ge c_1>0$ in $D$}.
\end{array}\right.
\enn
\end{theorem}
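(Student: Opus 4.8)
The plan is to combine the two structural facts already in hand: first, from Remark \ref{re-circle} (via Lemma \ref{normal} and the unitarity of the scattering operator), every non-zero eigenvalue $\lambda_n$ of $F$ lies on the circle
\ben
\left\{z\in\C:\left|z-\frac{2\pi i}{k}\right|=\frac{2\pi}{k}\right\};
\enn
second, from Theorem \ref{infinite}, there are countably infinitely many non-zero eigenvalues $\lambda_n\to0$ as $n\to\infty$. A point $z$ on this circle near $0$ is, up to the factor $2\pi/k$, of the form $z=\frac{2\pi}{k}(e^{i\theta}+i)$ with $e^{i\theta}\to -i$, i.e. $\theta\to-\pi/2$; there are only the two approach arcs, one through the second quadrant (where $\Rt z<0$) and one through the first quadrant (where $\Rt z>0$). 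Hence $\lambda_n/|\lambda_n|$ accumulates only at the two candidate limits $-1$ and $+1$, corresponding respectively to $\Rt\lambda_n<0$ eventually and $\Rt\lambda_n>0$ eventually. More precisely, I would show directly that if $\lambda_n\to0$ with $\lambda_n$ on the circle, then $\lambda_n/|\lambda_n|\to -1$ whenever $\Rt\lambda_n<0$ for all large $n$, and $\lambda_n/|\lambda_n|\to +1$ whenever $\Rt\lambda_n>0$ for all large $n$: indeed writing $\lambda_n=x_n+iy_n$ with $x_n^2+y_n^2=\frac{4\pi}{k}y_n$, one gets $y_n=O(|\lambda_n|^2)=o(1)$, so $|\lambda_n|=|x_n|+o(|x_n|)$ and $\lambda_n/|\lambda_n|=\mathrm{sign}(x_n)+o(1)$.

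It then remains to pin down the sign of $\Rt\lambda_n$ for large $n$ in each of the three cases, and this is exactly what Lemma \ref{one-way} delivers. In the sound-soft case, Lemma \ref{one-way}(a) says only finitely many eigenvalues have $\Rt\lambda_n>0$; combined with the fact that infinitely many eigenvalues are non-zero (Theorem \ref{infinite}) and that eigenvalues on the circle have $\Rt\ne0$ unless they are $0$, we conclude $\Rt\lambda_n<0$ for all large $n$, hence $\lambda_n/|\lambda_n|\to-1$. In the non-absorbing impedance case, Lemma \ref{one-way}(b) gives only finitely many eigenvalues with $\Rt\lambda_n<0$, so $\Rt\lambda_n>0$ for all large $n$ and $\lambda_n/|\lambda_n|\to+1$. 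In the non-absorbing medium case with $|m|\ge c_1$ in $D$, Lemma \ref{one-way}(c) gives: if $m\le-c_1$ then only finitely many $\lambda_n$ have $\Rt\lambda_n>0$, so the limit is $-1=\mathrm{sign}(m)$; if $m\ge c_1$ then only finitely many $\lambda_n$ have $\Rt\lambda_n<0$, so the limit is $+1=\mathrm{sign}(m)$. In all cases $\lim_{n\to\infty}\lambda_n/|\lambda_n|$ exists and equals the asserted value.

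One technical point worth making explicit before invoking the above: since the eigenvalues $\{\lambda_n\}$ of the compact normal operator $F$ accumulate only at $0$ and all non-zero ones lie on a fixed circle through the origin, there can be no $\lambda_n=0$ hidden in the tail of the enumeration — we enumerate the non-zero eigenvalues (with multiplicity), which by Theorem \ref{infinite} is a genuinely infinite list, and discard the at-most-finite-dimensional kernel. This ensures the normalization $\lambda_n/|\lambda_n|$ makes sense for all large $n$. I do not expect any single step to be a serious obstacle here: the geometry of the circle is elementary and Lemma \ref{one-way} does the real work. The only place requiring a little care is the bookkeeping that "finitely many exceptions" plus "infinitely many non-zero eigenvalues on the circle" forces the sign of $\Rt\lambda_n$ to stabilize — but this is immediate once one notes that on the circle the sign of the real part is locally constant away from the origin. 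Accordingly, the proof is essentially a short deduction from Remark \ref{re-circle}, Theorem \ref{infinite}, and Lemma \ref{one-way}.
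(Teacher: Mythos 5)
Your argument is correct and follows essentially the same route as the paper's proof, which likewise combines Remark \ref{re-circle}, Theorem \ref{infinite}, and Lemma \ref{one-way}; your explicit computation on the circle ($y_n=\frac{k}{4\pi}|\lambda_n|^2=o(|\lambda_n|)$, hence $\lambda_n/|\lambda_n|\to\mathrm{sign}(\Rt\lambda_n)$) just makes precise what the paper leaves as "the possible accumulation points of the normalized non-zero eigenvalues are $\pm1$." The only nit is your remark that a non-zero eigenvalue on the circle must have non-zero real part: the point $4\pi i/k$ is a counterexample, but it is harmless here since $\lambda_n\to0$ excludes it for large $n$.
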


\begin{proof}
By Remark \ref{re-circle} and Theorem \ref{infinite} we know that the possible accumulations
of the normalized non-zero eigenvalues of $F$ can only be $\pm 1$.
Theorem \ref{eigentend} then follows easily from Theorem \ref{infinite}
and Lemma \ref{one-way}.
The proof is complete.
\end{proof}

\section{Proofs of the main results}\label{sec4}

In this section we prove our main results, Theorems \ref{main} and \ref{main2}.

{\em Proof of Theorems $\ref{main}$.}
From (\ref{lsp}) it is easy to see that (\ref{m=}) is equivalent to the equation
\be\label{modulus=}
|u_1^\infty(\hat{x},d_1)+u_1^\infty(\hat{x},d_2)|=|u_2^\infty(\hat{x},d_1)+u_2^\infty(\hat{x},d_2)|,\quad
\forall\hat{x},d_1,d_2\in\Sp^2.
\en
This implies that
\be\label{interterm}
2\Rt\{u_1^\infty(\hat{x},d_1)\ov{u_1^\infty(\hat{x},d_2)}\}
=2\Rt\{u_2^\infty(\hat{x},d_1)\ov{u_2^\infty(\hat{x},d_2)}\}.
\en
Define $r_j(\hat{x},d):=|u_j^\infty(\hat{x},d)|$, $j=1,2$. Then, by (\ref{modulus=}) with $d_1=d_2=:d$
we have
\be\label{thm1-1}
r_1(\hat{x},d)=r_2(\hat{x},d)=:r(\hat{x},d),\quad\forall\hat{x},d\in\Sp^2,
\en
so
\ben
u_j^\infty(\hat{x},d)=r(\hat{x},d)e^{i\theta_j(\hat{x},d)},\quad\forall\hat{x},d\in\Sp^2,\;\;j=1,2,
\enn
where $\theta_j(\hat{x},d)$, $j=1,2$, are real-valued functions of both variables.
For the case $r(\hat{x},d)\equiv0$, we have $u_1^\infty(\hat{x},d)\equiv u_2^\infty(\hat{x},d)\equiv0$
for all $\hat{x},d\in\Sp^2$.

We now consider the case $r(\hat{x},d)\not\equiv0$ for $\hat{x},d\in\Sp^2$.
Define
\ben
U:=\{(\hat{x},d)\in\Sp^2\times\Sp^2|r(\hat x,d)\not=0\}.
\enn
Then, by the continuity of $r(\hat{x},d)$ in $(\hat{x},d)\in\Sp^2\times\Sp^2$ it follows that
$U$ is an open domain of $\Sp^2\times\Sp^2$. Further, since $u_j^\infty(\hat{x},d)$, $j=1,2$, are
analytic functions of $\hat{x}\in\Sp^2$ and $d\in\Sp^2$, respectively, we may choose two open
sets $U_1,U_2\subset\Sp^2$ small enough so that $U_1\times U_2\subset U$ and $\theta_j(\hat{x},d)$,
$j=1,2$, are analytic with respect to $\hat{x}\in U_1$ and $d\in U_2$, respectively.

Now, by (\ref{interterm}) and the definition of $U$ we have
\be\label{cos=}
\cos[\theta_1(\hat{x},d_1)-\theta_1(\hat{x},d_2)]=\cos[\theta_2(\hat{x},d_1)-\theta_2(\hat{x},d_2)]
\en
for all $(\hat{x},d_j)\in U_1\times U_2$, $j=1,2$. By (\ref{cos=}) and the fact that
$\theta_j(\hat{x},d)$, $j=1,2$, are real-valued analytic functions of $\hat{x}\in U_1$ and $d\in U_2$,
respectively, we obtain that there holds either
\be\label{case1}
\theta_1(\hat{x},d_1)-\theta_1(\hat{x},d_2)=\theta_2(\hat{x},d_1)-\theta_2(\hat{x},d_2),\;\;
\forall(\hat{x},d_j)\in U_1\times U_2
\en
or
\be\label{case2}
\theta_1(\hat{x},d_1)-\theta_1(\hat{x},d_2)=-[\theta_2(\hat{x},d_1)-\theta_2(\hat{x},d_2)],\;\;
\forall(\hat{x},d_j)\in U_1\times U_2,
\en
where $j=1,2.$

For the case when (\ref{case1}) holds, we have
\ben
\theta_1(\hat{x},d_1)-\theta_2(\hat{x},d_1)=\theta_1(\hat{x},d_2)-\theta_2(\hat{x},d_2),\;\;
\forall(\hat{x},d_j)\in U_1\times U_2,\;\;j=1,2.
\enn
Fix $d_2\in U_2$ and define
\ben
\alpha(\hat{x}):=\theta_1(\hat{x},d_2)-\theta_2(\hat{x},d_2),\quad\forall\hat{x}\in U_1.
\enn
Then, by (\ref{case1}) we have
\ben
u_1^\infty(\hat{x},d)=r(\hat{x},d)e^{i\theta_1(\hat{x},d)}
=r(\hat{x},d)e^{i\alpha(\hat{x})+i\theta_2(\hat{x},d)}=e^{i\alpha(\hat{x})}u_2^\infty(\hat{x},d)
\enn
for all $(\hat{x},d)\in U_1\times U_2$, where we use $d$ to replace $d_1$. By the analyticity of
$u_1^\infty(\hat{x},d)-e^{i\alpha(\hat{x})}u_2^\infty(\hat{x},d)$ in $d\in\Sp^2$, we get
\be\label{case1-1}
u_1^\infty(\hat{x},d)=e^{i\alpha(\hat{x})}u_2^\infty(\hat{x},d),\quad\forall\hat{x}\in U_1,d\in\Sp^2.
\en
Changing the variables $\hat{x}\rightarrow -d$ and $d\rightarrow-\hat{x}$ in (\ref{case1-1}) gives
\ben
u_1^\infty(-d,-\hat{x})=e^{i\alpha(-d)}u_2^\infty(-d,-\hat{x}),\quad\forall -d\in U_1,\hat{x}\in\Sp^2.
\enn
The reciprocity relation $u_j^\infty(\hat{x},d)=u_j^\infty(-d,-\hat{x})$ for all $\hat{x},d\in\Sp^2$
($j=1,2$) leads to the result
\be\label{eq1}
e^{i\alpha(\hat{x})}u^\infty_2(\hat{x},d)=e^{i\alpha(-d)}u^\infty_2(\hat{x},d),\quad\forall\hat{x},-d\in U_1.
\en
Since $r(\hat{x},d)\not\equiv0$ for $\hat{x},d\in\Sp^2$, it follows from (\ref{eq1}) and
the analyticity of $u^\infty_j(\hat{x},d)$ ($j=1,2$) with respect to $\hat{x}\in\Sp^2$ and $d\in\Sp^2$,
respectively, that
\ben
\exp[{i\alpha(\hat{x})}]=\exp[{i\alpha(-d)}],\quad \forall \hat{x},-d\in U_1.
\enn
For a fixed $-\wid{d}\in U_1$ take $d=\wid{d}$ in the above formula to give that $\exp[{i\alpha(\hat{x})}]
=\exp({i\alpha})$ for all $\hat{x}\in U_1$, where $\alpha:=\alpha(-\wid{d})$ is a real constant.
Substituting this formula into (\ref{case1-1}) gives
\ben
u_1^\infty(\hat{x},d)=e^{i\alpha}u_2^\infty(\hat{x},d),\quad\forall\hat{x}\in U_1,d\in\Sp^2.
\enn
By the analyticity of $u_j^\infty(\hat{x},d)$ ($j=1,2$) with respect to $\hat{x}\in\Sp^2$ it follows that
\be\label{Case1-result}
u_1^\infty(\hat{x},d)=e^{i\alpha}u_2^\infty(\hat{x},d),\quad\forall\hat{x},d\in\Sp^2.
\en

For the case when (\ref{case2}) holds, a similar argument as above gives the result
\be\label{Case2-result}
u_1^\infty(\hat{x},d)=e^{i\beta}\ov{u_2^\infty(\hat{x},d)},\quad\forall\hat{x},d\in\Sp^2,
\en
where $\beta$ is a real constant.

We now prove that (\ref{Case2-result}) does not hold. In fact, suppose $\{(\lambda_n,g_n)\}_{n=1}^\infty$
is the eigensystem of the far-field operator $F_1$ corresponding to the obstacle $D_1$ or the refraction
of index $n_1$. Then, by (\ref{Case2-result}) we have
\ben
(F_2\ov{g_n})(\hat{x})&=&\int_{\Sp^2}u^\infty_2(\hat{x},d)\ov{g_n(d)}ds(d)
=\int_{\Sp^2}e^{i\beta}\ov{u^\infty_1(\hat{x},d)g_n(d)}ds(d)\\
&=&e^{i\beta}\ov{(F_1g_n)(\hat{x})}=e^{i\beta}\ov{\lambda_n}\ov{g_n(\hat{x})}.
\enn
Thus, $\{(e^{i\beta}\ov{\lambda_n},\ov{g_n})\}_{n=1}^\infty$ is the eigensystem of the far-field operator $F_2$
corresponding to the obstacle $D_2$ or the refraction of index $n_2$. Theorem \ref{eigentend} implies that
\be\label{Case2-F1}
\lim_{n\rightarrow\infty}\frac{\lambda_n}{|\lambda_n|}
&=&\left\{\begin{array}{ll}
-1 & \text{if $D_1$ is a sound-soft obstacle},\\
1 & \text{if $D_1$ is a non-absorbing impedance obstacle},\\
\text{sign}(m_1) & \text{for real-valued $n_1$ with $|m_1|\ge c_1$ in $D_1$},
\end{array}\right.\\ \label{Case2-F2}
\lim_{n\rightarrow\infty}\frac{e^{i\beta}\ov{\lambda_n}}{|e^{i\beta}\ov{\lambda_n}|}
&=&\left\{\begin{array}{ll}
-1 & \text{if $D_2$ is a sound-soft obstacle},\\
1 & \text{if $D_2$ is a non-absorbing impedance obstacle},\\
\text{sign}(m_2) & \text{for real-valued $n_2$ with $|m_2|\ge c_1$ in $D_2$},
\end{array}\right.\;\;
\en
where $m_j=n_j-1$ in $D_j$, $j=1,2$.
Note that $D_1$ and $D_2$ are both either sound-soft obstacles or
non-absorbing impedance obstacles. Note further that $n_1$ and $n_2$ are both real-valued and satisfy
that either $m_j=n_j-1\ge c_1$ in $D_j$ ($j=1,2$) or $m_j=n_j-1\le-c_1$ in $D_j$ ($j=1,2$),
so the sign of $m_1$ in $D_1$ is the same as that of $m_2$ in $D_2$.
Therefore, from (\ref{Case2-F1}) and (\ref{Case2-F2}) it follows that $e^{i\beta}=1$.
Hence $\{\ov{\lambda_n}\}_{n=1}^\infty$ are the eigenvalues of the far-field operator $F_2$.
Recall that $\{\lambda_n\}_{n=1}^\infty$ are the eigenvalues of the far-field operator $F_1$.
By Remark \ref{re-circle}, both non-zero elements of $\{\lambda_n\}_{n=1}^\infty$ and
$\{\ov{\lambda_n}\}_{n=1}^\infty$ lie on the circle (\ref{circle}) in the upper-half complex plane,
which is a contradiction. This means that (\ref{Case2-result}) does not hold.

We now consider (\ref{Case1-result}). Let $\{(\lambda_n,g_n)\}_{n=1}^\infty$ be the eigensystem of
the far-field operator $F_1$ corresponding to the obstacle $D_1$ or the refraction of index $n_1$.
Then it follows from (\ref{Case1-result}) that
\ben
(F_2g_n)(\hat x)&=&\int_{\Sp^2}u^\infty_2(\hat{x},d)g_n(d)ds(d)
=\int_{\Sp^2}e^{-i\alpha}u^\infty_1(\hat{x},d)g_n(d)ds(d)\\
&=&e^{-i\alpha}(F_1g_n)(\hat{x})=e^{-i\alpha}\lambda_ng_n(\hat{x}).
\enn
Thus, $\{(e^{-i\alpha}\lambda_n,g_n)\}_{n=1}^\infty$ is the eigensystem of the far-field operator $F_2$
corresponding to the obstacle $D_2$ or the refraction of index $n_2$. By Theorem \ref{eigentend}, we have
\ben\label{Case1-F1}
\lim_{n\rightarrow\infty}\frac{\lambda_n}{|\lambda_n|}
&=&\left\{\begin{array}{ll}
-1 & \text{if $D_1$ is a sound-soft obstacle},\\
1 & \text{if $D_1$ is a non-absorbing impedance obstacle},\\
\text{sign}(m_1) & \text{for real-valued $n_1$ with $|m_1|\ge c_1$ in $D_1$},
\end{array}\right.\\ \label{Case1-F2}
\lim_{n\rightarrow\infty}\frac{e^{-i\alpha}\ov{\lambda_n}}{|e^{-i\alpha}\ov{\lambda_n}|}
&=&\left\{\begin{array}{ll}
-1 & \text{if $D_2$ is a sound-soft obstacle},\\
1 & \text{if $D_2$ is a non-absorbing impedance obstacle},\\
\text{sign}(m_2) & \text{for real-valued $n_2$ with $|m_2|\ge c_1$ in $D_2$},
\end{array}\right.\;\;
\enn
Similarly as above, we obtain that $e^{-i\alpha}=1$. Hence
\be\label{equiv}
u_1^\infty(\hat{x},d)=u_2^\infty(\hat{x},d),\;\;\forall \hat{x},d\in\Sp^2.
\en

For two sound-soft or non-absorbing impedance obstacles $D_1$ and $D_2$, by (\ref{equiv}) and
\cite[Theorem 5.6]{CK} we have $D_1=D_2$ and $\eta_1=\eta_2$.
For two indices of refraction $n_1$ and $n_2$ satisfying the assumptions in Theorem \ref{main},
by (\ref{equiv}) and \cite[Theorem 6.26]{Kirsch} we have $n_1=n_2$. Theorem \ref{main} is thus proved.
$\Box$

{\em Proof of Theorem $\ref{main2}$.}
By (\ref{r=}) and (\ref{m=2}) we know that the equations (\ref{modulus=}) and (\ref{interterm}) still
hold with $d_1=d$ and $d_2=d_0$. Further, by (\ref{r=}) the equation (\ref{thm1-1}) holds.
Then the remaining part of the proof of Theorem \ref{main} works by letting $d_1=d$ and $d_2=d_0$
and noting that $(\hat{x},d_0)\in U$ for all $\hat{x}$ in a small open domain of $\Sp^2$ (since,
otherwise, $u^\infty_j(\hat{x},d_0)\equiv0$ for all $\hat{x}\in\Sp^2$ so, by Rellich's lemma,
$u^s_j(x,d_0)\equiv0$ for all $x\in\R^3\se\ov{D_j}$, leading to the contradiction that $u^i(x,d_0)\equiv0$
or $\pa u^i(x,d_0)/\pa\nu+\eta u^i(x,d_0)\equiv0$ for all $x\in\pa D_j$, $j=1,2$).
The proof is complete. $\Box$

\section{Conclusion}\label{conclusion}

In this paper, we established uniqueness in inverse acoustic scattering with phaseless far-field patterns
associated with infinitely many sets of superpositions of two plane waves with different
directions at a fixed frequency for sound-soft or non-absorbing impedance obstacles and for a non-absorbing
inhomogeneous medium with the real-valued index of refraction $n$ satisfying that either $n(x)-1\ge c_1$
or $n(x)-1\le-c_1$ in the support $D$ for some constant $c_1>0$.
To the best of our knowledge, this is the first uniqueness result in inverse scattering with phaseless
far-field data. As an ongoing project, we are currently trying to remove the non-absorbing condition on
the boundary impedance function $\eta$ and the index of refraction $n$.
In the near future, we hope to extend the results to the case without a priori knowing the property of
the obstacle and the refractive index of the inhomogeneous medium and to the case of electromagnetic waves.

\section*{Acknowledgements}

This work is partly supported by the NNSF of China grants 91430102, 91630309, 61379093 and 11501558
and the National Center for Mathematics and Interdisciplinary Sciences, CAS.


\begin{thebibliography}{99}

\bibitem{ACZ16} H. Ammari, Y.T. Chow and J. Zou, Phased and phaseless domain reconstructions in the invere
scattering problem via scattering coeffieicents, {\em SIAM J. Appl. Math. \bf76} (2016), 1000-1030.

\bibitem{BaoLiLv2012} G. Bao, P. Li and J. Lv, Numerical solution of an inverse diffraction grating problem
from phaseless data, {\em J. Opt. Soc. Am. A \bf30} (2013), 293-299.

\bibitem{BaoZhang16} G. Bao and L. Zhang, Shape reconstruction of the multi-scale rough surface from
multi-frequency phaseless data, {\em Inverse Problems \bf32} (2016), 085002 (16pp).

\bibitem{Blasten}E. Blasten, The inverse problem of the Schr\"odinger equation in the plane,
Licentiate Thesis, University of Helsinki, 2011 or arXiv:1103.6200v1, 2011.

\bibitem{B} A.L. Bukhgeim, Recovering a potential from Cauchy data in the two dimensional case,
{\em J. Inverse Ill-Posed Problems \bf16} (2008), 19-33.

\bibitem{Candes13} E.J. Candes, T. Strohmer and V. Voroninski, PhaseLift: Exact and
stable signal recovery from magnitude measurements via convex programming,
{\em Commun. Pure Appl. Math. \bf66} (2013), 1241-1274.

\bibitem{Candes15} E.J. Candes, X. Li and M. Soltanolkotabi, Phase retrieval via Wirtinger flow:
Theory and algorithms, {\em IEEE Trans. Information Theory \bf61} (2015), 1985-2007.

\bibitem{TGRS03} S. Caorsi, A. Massa, M. Pastorino and A. Randazzo,
Electromagnetic detection of dielectric scatterers using phaseless synthetic and real data
and the memetic algorithm, {\em IEEE Trans. Geosci. Remote Sensing \bf41} (2003), 2745-2753.

\bibitem{CH16} Z. Chen and G. Huang, A direct imaging method for electromagnetic scattering data
without phase information, {\em SIAM J. Imaging Sci. \bf9} (2016), 1273-1297.

\bibitem{CH17} Z. Chen and G. Huang, Phaseless imaging by reverse time migration: Acoustic waves,
{\em Numer. Math. Theor. Meth. Appl. \bf10} (2017), 1-21.

\bibitem{CK} D. Colton and R. Kress, {\em Inverse Acoustic and Electromagnetic Scattering Theory}
(3rd Ed), Springer, New York, 2013.

\bibitem{DM} L. Debnath and P. Mikusinski, {\em Introduction to Hilbert Spaces with Applications}
(3rd Ed), Elsevier, Singapore, 2005.

\bibitem{EP} J.-P. Eckmann and C.-A. Pillet, Spectral duality for planar billiards,
{\em Comm. Math. Phys. \bf170} (1995), 283-313.

\bibitem{I2007} O. Ivanyshyn, Shape reconstruction of acoustic obstacles from the modulus of
the far field pattern, {\em Inverse Probl. Imaging \bf1} (2007), 609-622.

\bibitem{IK2010} O. Ivanyshyn and R. Kress, Identification of sound-soft 3D obstacles from
phaseless data, {\em Inverse Probl. Imaging \bf4} (2010), 131-149.

\bibitem{IK2011} O. Ivanyshyn and R. Kress, Inverse scattering for surface impedance from
phaseless far field data, {\em J. Comput. Phys. \bf230} (2011), 3443-3452.

\bibitem{KG} A. Kirsch and N. Grinberg, {\em The Factorization Methods for Inverse Problems},
Oxford Univ. Press, Oxford, 2008.

\bibitem{Kirsch} A. Kirsch, {\em An Introduction to the Mathematical Theory of Inverse Problems}
(2nd Ed), Springer, New York, 2011.

\bibitem{K14} M.V. Klibanov, Phaseless inverse scattering problems in three dimensions,
{\em SIAM J. Appl. Math. \bf74} (2014), 392-410.

\bibitem{K17} M.V. Klibanov, A phaseless inverse scattering problem for the 3-D Helmholtz equation,
{\em Inverse Probl. Imaging \bf11} (2017), 263-276.

\bibitem{KR16} M.V. Klibanov and V.G. Romanov,
Reconstruction procedures for two inverse scattering problems without the phase information,
{\em  SIAM J. Appl. Math. \bf76} (2016), 178-196.

\bibitem{KR} R. Kress and W. Rundell, Inverse obstacle scattering with modulus of the far field pattern
as data, in: {\em Inverse Problems in Medical Imaging and Nondestructive Testing}, Springer, Wien, 1997,
pp. 75-92.

\bibitem{LP} A. Lechleiter and S. Peters, The inside-outside duality for inverse scattering problems
with near field data, {\em Inverse Problems \bf31} (2015) 085004 (23pp).

\bibitem{LiLiu15} J. Li and H. Liu, Recovering a polyhedral obstacle by a few backscattering measurements,
{\em J. Differential Equat. \bf259} (2015), 2101-2120.

\bibitem{Li09} L. Li, H. Zheng and F. Li, Two-dimensional contrast source inversion
method with phaseless data: TM case, {\em IEEE Trans. Geosci. Remote Sensing \bf47} (2009), 1719-1736.

\bibitem{LS04} J. Liu and J. Seo, On stability for a translated obstacle with impedance boundary condition,
{\em Nonlinear Anal. \bf59} (2004), 731-744.

\bibitem{LZ10} X. Liu and B. Zhang, Unique determination of a sound-soft ball by the modulus of a single
far field datum, {\em J. Math. Anal. Appl. \bf365} (2010), 619-624.

\bibitem{MDS92} M.H. Maleki, A.J. Devaney and A. Schatzberg, Tomographic reconstruction from optical
scattered intensities, {\em J. Opt. Soc. Am. A\bf9} (1992), 1356-1363.

\bibitem{MD93} M.H. Maleki and A.J. Devaney, Phase-retrieval and intensity-only reconstruction algorithms
for optical diffraction tomography, {\em J. Opt. Soc. Am. A\bf10} (1993), 1086-1092.

\bibitem{MH17} S. Maretzke and T. Hohage, Stability estimates for linearized near-field phase retrieval
in X-ray phase contrast imaging, {\em SIAM J. Appl. Math. \bf77} (2017), 384-408.

\bibitem{M} W. McLean, {\em Strongly Elliptic Systems and Boundary Integral Equations},
Cambridge Univ. Press, Cambridge, 2000.


\bibitem{N15} R.G. Novikov, Formulas for phase recovering from phaseless scattering data at fixed frequency,
{\em Bull. Sci. Math. \bf139} (2015), 923-936.

\bibitem{N16} R.G. Novikov, Explicit formulas and global uniqueness for phaseless inverse scattering
in multidimensions, {\em J. Geom. Anal. \bf26} (2016), 346-359.


\bibitem{Pan11} L. Pan, Y. Zhong, X. Chen and S.P. Yeo, Subspace-based optimization method for inverse
scattering problems utilizing phaseless data, {\em IEEE Trans. Geosci. Remote Sensing \bf49} (2011), 981-987.


\bibitem{S16} J. Shin, Inverse obstacle backscattering problems with phaseless data,
{\em Euro. J. Appl. Math. \bf27} (2016), 111-130.

\bibitem{MOTL97} T. Takenaka, D.J.N. Wall, H. Harada and M. Tanaka, Reconstruction algorithm of the
refractive index of a cylindrical object from the intensity measurements of the total field,
{\em Microwave Opt. Tech. Lett. \bf14} (1997), 139-197.

\bibitem{ZZ01} B. Zhang and H. Zhang, Recovering scattering obstacles by multi-frequency phaseless
far-field data, {\em J. Comput. Phys. \bf345} (2017), 58-73.

\bibitem{ZZ02} B. Zhang and H. Zhang, Imaging of locally rough surfaces from intensity-only far-field
or near-field data, {\em Inverse Problems \bf33} (2017) 055001 (28pp).

\bibitem{ZZ03} B. Zhang and H. Zhang, Fast imaging of scattering obstacles by intensity-only far-field
measurements at a fixed frequency, {\em Preprint}, 2016.

\end{thebibliography}
\end{document}